\documentclass[11pt,reqno]{amsart}
\usepackage{amsmath,amsfonts,amssymb,amsthm,amsbsy,bbm, amscd,  epsf,calc,enumerate,color,graphicx,verbatim, upref}
\usepackage{ifpdf}
\ifpdf
\pdfoutput=1
    \usepackage[colorlinks,citecolor=blue,linkcolor=blue]{hyperref}
\fi
\usepackage[shortalphabetic,initials]{amsrefs}
\usepackage{color}
\usepackage{datetime}
\numberwithin{equation}{section}
\setlength{\oddsidemargin}{0mm}
\setlength{\evensidemargin}{0mm}
\setlength{\topmargin}{5mm}
\setlength{\textheight}{22cm}
\setlength{\textwidth}{17cm}

\parindent 0pt

\newcommand{\commentout}[1]{}
\newcommand{\vs}{\vskip.075in}

\newcommand{\R}{\mathbb{R}}
\newcommand{\N}{\mathbb{N}}
\newcommand{\Z}{\mathbb{Z}}

\newcommand{\beq}{\begin{equation}}
\newcommand{\eeq}{\end{equation}}
\newcommand{\ec}{\end{cases}}
\newcommand{\bc}{\begin{cases}}
\newcommand{\ol}{\overline}

\newcommand{\buc}{\text{BUC}}
\newcommand{\rt}{\R^d \times \ [0,T]}

\newcommand {\e}  {\varepsilon}

\newcommand {\df}   {\dfrac}

\newcommand {\ep}  {\epsilon}

\newcommand\round[1]{\left[#1\right]}

\newcommand{\bs}{\begin{split}}
\newcommand{\ess}{\end{split}}
\newcommand{\bea} {\begin{array}{rl}}
\newcommand{\eea} {\end{array}}
\newcommand{\bepa}{\left\{ \begin{array}{l}}
\newcommand{\eepa} {\end{array}\right.}
\newcommand{\too}{\text{o}}
\newcommand{\tO}{\text{O}}
\newtheorem{theorem}{Theorem}[section]
\newtheorem{lemma}[theorem]{Lemma}

\DeclareMathOperator{\DIV}{div}

%%%%%%%%%%%%%%%%%%%%%%%
%\newcommand{\qed}{{ \hfill
                      % {\unskip\kern 6pt\penalty 500 \raise -2pt\hbox{\vrule\vbox to 6pt{\hrule width 6pt}}
                     %  \vfill\hrule}\vrule} \par}   }}
%%%%%%%%%%%%%%%%%%%%%%%
\begin{document}

\title{Convergence of deterministic growth models}

\author[S. Chatterjee]{Sourav Chatterjee}
\address{Departments of Mathematics and Statistics\\ Stanford University}
%\\ 480 Lincoln Drive\\
%Madison, Wisconsin 53706.}
\email{souravc@stanford.edu}
\author[P.~E. Souganidis]{Panagiotis E. Souganidis}
\address{Department of Mathematics\\ The University of Chicago}
%\\ 5734 S. University Avenue
%Chicago, Illinois 60637.}
\email{souganidis@uchicago.edu}
%%Evolution of dispersion trait; {Rare mutations limit of a steady state dispersion trait model }

\begin{abstract}
We prove the uniform in space and time convergence of the scaled heights of large classes of deterministic growth models that are monotone and equivariant under translations by constants. The limits are characterized as the unique (viscosity solutions) of first- or second-order  partial differential equations depending on whether the growth models are scaled hyperbolically or parabolically. The results greatly simplify and extend a recent work by the first author to more general surface growth models. The proofs are based on the methodology developed by  Barles and the second author to prove convergence of approximation schemes.
\end{abstract}

\maketitle

 \section{Introduction}\label{intro}
 
In this note we prove the uniform in space and time convergence of the scaled heights of large classes of deterministic growth models that are monotone and equivariant under translations by constants. The limits are unique viscosity solutions  of  first- or second-order  partial differential equations (PDEs for short) depending on whether the growth models are scaled hyperbolically or parabolically. Examples of such equations in the parabolic scaling include the deterministic Kardar--Parisi--Zhang (KPZ) equation as well as nonlinear PDE  with discontinuities in the gradient that are ``compatible''
with  Finsler norms, like, for example, the crystalline infinity Laplacian, that is,   
the infinity Laplacian 
corresponding to the $l^1$-metric in $\R^d$.

\vs

Our results are based on nonlinear PDE techniques (viscosity solutions) and, in particular, the methodology developed  by Barles and the second author  (Barles and Souganidis~\cite{BS}) to prove convergence of monotone approximation schemes.
\vs

Since regularity plays no role, we are able to study very general and broad settings of deterministic growth models with nonsmooth generating (height) functions. This leads in the limit  to some unusual PDE with discontinuities.  Moreover, the scaling need not always be parabolic.
\vs

Our method greatly simplifies  a recent work of the first author (Chatterjee~\cite{C}; see also \cite{C3}), whose main focus was convergence of smooth height functions to the deterministic Kardar--Parisi--Zhang (KPZ) equation. The results of \cite{C}, which  were based on linear PDE estimates requiring higher ($C^2$) regularity, imposed  more assumptions.

\vs 

The investigation is motivated by activity surrounding the KPZ equation in probability theory and mathematical physics, although the investigation in \cite{C} and in this paper are about deterministic, rather than random, surface growth. The KPZ equation is a stochastic PDE, introduced by Kardar, Parisi and Zhang in \cite{KPZ86}, which  is conjectured to be the  ``universal scaling limit'' of a large class of growing random surfaces. In spite of tremendous progress in the last twenty years (for a very brief survey, see \cite[Section 1.4]{C}), this conjecture remains largely open in dimensions higher than one. In \cite{C}, it was shown that if the randomness is dropped,  then a general class of parabolically scaled deterministically growing   surfaces   converge to the solution of a deterministic version of the KPZ equation. 
 
\vs 
As an example of the type of results obtained in this paper, we describe next  a version of the zero temperature Glauber dynamics of a gradient Gibbs measure with potential $V$ (see Subsection \ref{gibbs}).

\vs 
Suppose that we have a $d$-dimensional surface growing deterministically according to the following rule: Let $f(x,t)$ denote the height of the surface at a point $x\in \Z^d$ at time $t\in \Z_+$. Then $f(x,t+1)$ is the middle point of the values of $y$ that minimize
\[
\sum_{i=1}^d V(y - f(x+e_i,t)) + \sum_{i=1}^d V(y - f(x-e_i, t)),
\]
where $e_1,\ldots,e_d$ are the standard basis vectors of $\R^d$ and $V$ is a convex symmetric potential function. 
\vs 

We obtain scaling limits of such surfaces for a large class of potentials, both smooth and non-smooth. For example, for the smooth potential $V(x)=x^4$, the parabolic scaling limit yields the PDE 
\[u_t=\dfrac{\sum_{i=1}^d (Du,e_i)^2 (D^2u \; e_i, e_i)}{2|Du|^2},\]
where $u_t$ is the partial derivative of $u$ with respect to time, $Du$ and $D^2u$  are respectively the gradient  and Hessian of $u$ with respect  to $x$, and $(\cdot,\cdot)$ is the inner product in $\R^d$. 
\vs

The PDE of the scaling limits for non-smooth  $V$  are more complicated. For instance, when $d=2$ and the potential is $V(x)=|x|$ (coming from the solid-on-solid model of statistical physics), the parabolically scaled limit is the viscosity solution (see Section \ref{schemesec} for the definition of the solution and the precise meaning of the PDE) of  
\[u_t=F(D^2u, Du) \ \ \text{in} \ \ \R^2\times (0,T],\]
where 
\[F(D^2u,Du)=\bc \frac{1}{2} u_{x_1x_1} \ \ \text{if} \  \ |u_{x_1}| > |u_{x_2}|, \\[2mm]
 \frac{1}{2} u_{x_2x_2} \ \ \text{if} \ \ |u_{x_2}| > |u_{x_1}|.  \ec\]
This provides a vast generalization of the setting of \cite{C}, where only examples leading to a deterministic KPZ scaling limit were considered. 
 
\vs
The paper is organized as follows. In the next section we discuss the general setting and describe the growth models we consider. In Section~\ref{schemesec} we present a variant of the argument of \cite{BS} which we use to prove our results in the next two sections. Section~\ref{hyperbolicsec} is about the hyperbolic scaling. The last section of the paper is devoted to the parabolic scaling. Since the results depend on the regularity and the nature of the minimum of the convex potential $V$, Section~\ref{parabolicsec} is divided into several subsections.

 \section{The general setup}\label{setup}
 
 We describe here the general scheme that will give convergence of the scaled heights of deterministic surface growth models. The goal is to formulate the  algorithm yielding the convergence, as described in \cite{C}, in a way that will allow us to use the methodology of ~\cite{BS} to prove convergence of approximation schemes.  The latter, which is described in the next section, yields the convergence of the scaled height functions to the unique (viscosity) solution of PDE associated with the specific growth model and scaling. 
 \vs
 
 The approach put forward here provides a considerably simpler proof of the main result of \cite{C} and, more importantly,
 allows the study of more general deterministic surface growth models with non-smooth generating functions which give rise to ``unusual'' first- and second-order partial differential equations. 
 
 \vs
 
Our presentation of the scheme is based on the general setting of \cite{C}. A $d$-dimensional discrete surface is a function from $\Z^d$ into $\R$, whose value at a point denotes the height of the surface at that point. We consider discrete surfaces evolving  over time according to some deterministic local rule, to be made precise below. 
\vs

Let $\{e_1, \ldots, e_d\}$  be the standard basis vectors of $\R^d$, $\Z_+$ the set of nonnegative integers, $\R_+=(0,\infty)$ and $\ol \R_+=[0,\infty)$. We denote by $A$ the set $\{0,\pm e_1, \ldots, \pm e_d\}$ consisting of the origin and its $2d$ nearest neighbors in $\Z^d$. Let $B = A \setminus \{0\}$. A surface growth model is described by some $\varphi: \R^A \to \R$, which is assumed to be equivariant under constant shifts and  monotone,  properties which are explained later in the paper.

\vs

We say that the evolution of a deterministically growing $d$-dimensional surface 
$u:  \Z^d \times \Z_+ \to \R$  is driven by $\varphi$ if, for each $(x,t) \in  \Z^d \times \Z_+$,  
\beq\label{takis1100}
u(x, t + 1) = \varphi((u(x + a,t))_{a \in A}).
\eeq

Throughout the discussion, we will be assuming that 
\[
\varphi (0,\ldots,0)=0.
\]
This causes no loss of generality, since the property of equivariance under constant shifts (explained later) will ensure that, if we replace $\varphi$ by 
\[
\widetilde \varphi =\varphi - \varphi (0,\ldots,0),
\]
then the new surface $v(x,t)$ is related to the old surface $u$ by  $v(x,t) = u(x,t)-t\varphi(0,\ldots,0)$. Henceforth, we will write $\varphi(0)$ instead of $\varphi(0,\ldots,0)$, for ease of notation. 
\vs

We are interested in the long-time and large-space behavior of the height function $u$. For this, it is convenient to extend $u$ to a function on $\R^d \times [0,\infty)$ and to scale space-time either  hyperbolically or parabolically. The choice of the scale depends on properties of the generating function $\varphi$.
\vs

To describe the scalings and the extension, we recall that, given $t\in \R$, $\round t$ denotes its integer part, and, for $x\in \R^d$, $ \round x=(\round{x_1},\ldots, \round{x_d})$.  
\vs
We start with the hyperbolic scaling. Given $\ep>0$, we assume that, for some given  $u_0:\R \to \R$, 
\[
u(x,0)=u_0(\ep x), 
\]
and generate $u(x,t)$ for $t>0$ by \eqref{takis1100}. Then, we define $u^\ep:\R^d\times [0,\infty)\to \R$  by 
\[
u^\ep(x,t)=u(\round{ \frac{x}{\ep}}, \round{\frac {t}{\ep}}).
\]
It is immediate that 
\[
u^\ep(x,t)= u_0(\ep \round{\frac{x}{\ep}}) \ \ \text{if} \ \ t\in [0,\ep).
\]
\vs
Next, we examine how $u^\ep$ evolves in time, in view of \eqref{takis1100}. We  use the elementary fact that $\round {t+1}=\round t +1$ and \eqref{takis1100} to get, for $t\geq \ep$,  the following string of equalities:
\beq\label{takis1104.1}
\begin{split}
u^\ep(x,t)&= u(\round{ \frac{x}{\ep}}, \round{\frac {t}{\ep}})=\varphi\left( \left (u(\round{ \frac{x}{\ep}}+a, \round{\frac {t}{\ep}}-1)\right )_{a\in A}\right)\\[1.2mm]
&= \varphi \left(\left (u(\round{ \frac{x +\ep a}{\ep}}, \round{\frac {t -\ep}{\ep}})\right )_{a\in A}\right).
\end{split}
\eeq
Hence, we have 
\[
u^\ep(x,t)=S(\ep) u^\ep(\cdot, t-\ep)(x),
\]
where, given $v:\R^d\to\R$,  
\[
S(\ep) v(x)= \varphi ((v(x + \ep a))_{a \in A}).
\]
For the parabolic scaling, given $\ep>0$, we assume that 
\[
u(x,0)=u_0(\sqrt{\ep} x), 
\]
for some given  $u_0:\R \to \R$, generate $u$ according to \eqref{takis1100}, 
and define $u^\ep:\R^d\times [0,\infty)\to \R$  by 
\[
u^\ep(x,t)=u(\round{ \frac{x}{\sqrt{\ep}}}, \round{\frac {t}{\ep}}).
\]
It is immediate that 
\[
u^\ep(x,t)= u_0(\sqrt{\ep} \round{\frac{x}{\sqrt{\ep}}}) \ \text{if} \ t\in [0,\ep).
\]
and, as above, we find that 
\[
u^\ep(x,t)=S(\ep) u^\ep(\cdot, t-\ep)(x),
\]
where, given $v:\R^d\to\R$,  
\[
S(\ep) v(x)= \varphi ((v(x + \sqrt{\ep} a))_{a \in A}).
\]
We note that we used the same notation for both the hyperbolic and parabolic scalings. We expect this to create no difficulties in what follows since  the arguments and statements will always specify which case we work with. The benefit, however, is that we do not need to introduce more notation.  
\vs
Finally, we remark that, for all $g\in \buc(\R^d)$, as $\delta\to 0$ and  uniformly in $x$,
\[g(\delta \round{\frac{x}{\delta}}) \to g(x),\]
where $\buc(\mathcal O)$ is the space of real-valued bounded uniformly continuous functions on $\mathcal O \subset \R^m$ for some $m\in \N$. In this paper, $\mathcal O$ is either $\R^d$ or $\R^d\times [0,T]$ for some $T>0$. 

\vs

The basic mathematical criterion about which scaling to use is how the scheme propagates linear functions. If such functions move in time, then the correct scaling is the hyperbolic one. If, however, linear functions remain the same, to see some nontrivial behavior we need to scale parabolically. 
\vs

The intuition behind the choice of scaling can also be described as follows. The scaling limit describes the long time and large space behavior of the growth process. The space scaling keeps the problem in a ``compact'' set in space while the time scaling can be thought heuristically as an expansion in $1/t, 1/t^2, \ldots$. The hyperbolic scaling  gives the $1/t$ term. If this is $0$, which is the case when linear functions do not move, then one goes to the $1/t^2$ term, hence the parabolic rescaling.
\vs

Finally,  from the modeling point of view, hyperbolic scaling may occur in any growth mechanism where the growth at a point is affected by the heights at only those neighboring points where the height is higher.

 \section{The approximation scheme}\label{schemesec}

 We describe here a reformulation of the abstract method put forward in \cite{BS} to establish  the (local uniform) convergence of approximations to the (viscosity) solution of the initial value problem
 \beq\label{ivp1}
 u_t=F(D^2u,Du) \ \ \text{in} \ \ \R^d\times (0,T] \qquad 
 u(\cdot,0)=u_0 \ \ \text{in} \  \ \R^d,
 \eeq
 with  $F:{\mathcal S}^d \times \R^d \to \R$ degenerate elliptic, 
 that is,
 \[
 F=F(X,p)  \text{ is increasing with respect to $X\in {\mathcal S}^d$},
 \]
where ${\mathcal S}^d$ is the space of symmetric $d\times d$ matrices and monotonicity is  interpreted in the sense of symmetric matrices, that is, $X\ge Y$ if $X-Y$ is positive semidefinite.  The scheme, presented below, asserts that monotone and  translation equivariant schemes that are consistent with \eqref{ivp1} converge 
(locally uniformly) to the unique Crandall-Lions viscosity solution   $u\in \text{BUC}(\R^d\times [0,T])$ of \eqref{ivp1}. For the convenience of the reader we recall the definition further down in this section. Notice that heretofore when we refer to sub-, super- and solutions, we always mean in the viscosity sense, that is, they are viscosity sub-, super- and solutions.
  \vs
Note that, in view of the assumed degenerate ellipticity of $F$, the method also works for  first-order Hamilton-Jacobi initial value problems like
 \[
  u_t=H(Du) \ \ \text{in} \ \ \R^d\times (0,T] \qquad
 u(\cdot,0)=u_0 \ \ \text{in} \  \ \R^d.
\]
In many of the  examples of surface growth models we study in this paper, the parabolically rescaled limits give rise to equations like \eqref{ivp1} with nonlinearities $F=F(X,p)$ which have discontinuities in the gradient component $p$. In such cases, it is more convenient and actually necessary to reinterpret to  and relax  \eqref{ivp1} as  two inequalities corresponding to sub- and super-solutions, that is, to consider the ``relaxed'' initial value problem 
\beq\label{general_ivp1}
u_t \leq \overline{F}(D^2u,Du)  \ \ \text{in} \ \  \R^d\times (0,T)  \ \ u_t \geq \underline{F}(D^2u, Du) \ \ \text{in} \ \  \R^d\times (0,T) \ \ u(\cdot,0)=u_0,
\eeq
where $\overline F \in \text{USC}({\mathcal S}^d\times \R^d) $ and $\underline F \in \text{LSC}({\mathcal S}^d\times \R^d)$. Here  $\text{USC}(U)$ and $\text{LSC}(U)
$ are respectively the sets of upper and lower semicontinuous functions on $U$.

 \subsection{The general setting and assumptions}  
 We work in $\mathcal B$,  the set of bounded functions $u:\R^d\to \R^d$, that is, functions satisfying  
 \[
 \|u\| =\underset{x\in \R^d}\sup |u(x)| <\infty.
 \]
For $\ep \in [0,1]$, let $S(\ep):\mathcal B \to \mathcal B$ be such that, for all $u,v\in \mathcal B, k\in \R$ and $\ep \in [0,1]$,
\beq\label{id}
S(0)u=u,
\eeq  
\beq\label{translation}
S(\ep)(u+k)=S(\ep)u+k,
\eeq
and
\beq\label{monotonicity}
\text{if} \  \ u\leq v, \ \text{ then} \  S(\ep)u\leq S(\ep)v.
\eeq  

The last two conditions  are referred to respectively as equivariance (or invariance) under translation by constants and monotonicity. 
A well-known observation of Crandall and Tartar \cite{CT} gives that, if~\eqref{translation} holds, then monotonicity is equivalent to contraction, that is, for all   $u,v\in \mathcal B$ and $\ep \in [0,1]$, 
\beq\label{contraction}
\|S(\ep)u-S(\ep)v\|\leq \|u-v\|.
\eeq
Next, we discuss the main assumption about $S(\ep)$, which connects it with  \eqref{ivp1}. Since we are aiming for some generality in order to incorporate all the examples we have in mind, the following assumption may appear a bit cumbersome.  
%In what follows, $\mathbb {S}^{d-1}$ is the unit sphere in  $\R^d$, and $\text{USC}(U)$ and $\text{LSC}(U)
%$ are respectively the sets of upper and lower semicontinuous functions on $U$. % and $\ol \R_+=[0,\infty)$. 
\vs
We assume that the family of operators $(S(\ep))_{\ep \in [0,1]}$ is such that 
\beq\label{consistency} 
\bc
\text{there exist degenerate elliptic $\overline F\in \text{USC}(\mathcal{S}^d\times \R^d)$ and $\underline F\in \text{LSC}(\mathcal{S}^d\times \R^d)$ such that,}\\[1.5mm]
 \text{for all $\phi \in C^2(\R^d)$ and $x\in \R^d$,} \\[2mm]
\underset{y\to x, \ep\to 0}\limsup \dfrac{S(\ep) \phi (y) -\phi (y)}{\ep} \leq \overline {F}(D^2\phi(x), D\phi(x)) \\[3mm]
\text{and}\\[1.5mm]
\underset{y\to x, \ep\to 0 }\liminf \dfrac{S(\ep) \phi (y) -\phi (y)}{\ep} \geq  \underline{F}(D^2\phi(x), D\phi(x)). 
\ec
\eeq
Next, we assume that  the initial value problem \eqref{general_ivp1}
%\beq\label{general_ivp1}
%u_t \leq \overline{F}(D^2u,Du)  \ \ \text{in} \ \  \R^d\times (0,T),  \ \ u_t \geq \underline{F}(D^2u, Du) \ \ \text{in} \ \  \R^d\times (0,T) \ \ u(\cdot,0)=u_0,
%\eeq
satisfies a comparison principle between bounded upper semicontinuous (\text{BUSC} for sort) subsolutions and bounded lower semicontinuous subsolutions (\text{BLSC} for short)  supersolutions, that is, 
\beq\label{lh1}
\bc
\text{if $w\in \text{BUSC}(\R^d\times [0,T])$ and $v\in \text{BLSC}(\R^d\times [0,T])$ satisfy}\\[1.5mm]
w_t\leq \overline F(D^2w,Dw) \ \ \text{and} \ \
v_t \geq \underline{F}(D^2v, Dv) \ \ \text{in} \ \  \R^d\times (0,T), \ \ 
\text{and} \ \ w(\cdot,0)\leq v(\cdot,0),\\[1.5mm] 
\text{then} \ \ w\leq v\ \ \text{in} \ \ \R^d \times [0,T].
\ec
\eeq
As already noted we work with the Crandall--Lions viscosity solutions of \eqref{ivp1} with $F$ degenerate elliptic, and  refer to the Crandall, Ishii and Lions  ``User's Guide'' \cite{CIL} for an extensive introduction to the theory.  For the reader's convenience, we recall here the definition of subsolution (resp.~subsolution) of \eqref{ivp1}. 
\vs

We say that  $u\in \text{USC}(\R^d\times (0,T])$ (resp.~$u\in \text{LSC}(\R^d\times (0,T]))$)  is a subsolution (resp.~supersolution)  of $u_t\leq \overline F (D^2u,Du)$ (resp. $u_t\geq \underline F (D^2u,Du)$) if,  for  every $\phi \in C^2(\R^d)$ and $g \in C^1((0,T])$ and a maximum  (resp.~minimum) point $(x_0,t_0) \in \R^d\times (0,T]$ of $u-\phi-g$,
\beq\label{def}
g'(t_0)\leq \overline {F} (D^2\phi(x_0), D\phi(x_0)) \ \ (\text{resp.} \ \  g'(t_0)\geq \underline {F}(D^2\phi(x_0), D\phi(x_0)) ).
\eeq

A function that is both a subsolution and a supersolution is called a solution. 
\vs
We remark here, and refer to \cite{CIL} for more discussion and proofs,  that in the definition of a subsolution (resp.~supersolution) maxima (resp.~minima) can be either local or global, and, finally, they can always be taken to be strict. 
\vs
We also note and refer to \cite{CIL} for more discussion that there is a great freedom in choosing  the regularity of the test function in  \eqref{def}, the general principle being that $\phi$ must be sufficiently regular so that \eqref{def} makes sense. As a consequence, if $\overline F$ and $\underline F$ do not depend on the Hessian,  it suffices to use test functions in $C^1(\R^d)$.
\vs
Finally, as it will become clear later, \eqref{consistency}  is used to check that a certain function is a subsolution (resp.~supersolution). Hence, the regularity of $\phi$ in \eqref{consistency} needs to be the same as the one of the test function used in \eqref{def}.
\vs

A few remarks are in order to explain the relationship between \eqref{ivp1} and \eqref{lh1}. In all the examples we investigate in this note, either $F=\overline F=\underline F$  in $\mathcal {S}^d \times \R^d$ or in $\mathcal{S}^d\times (\R^d\setminus \ol U)$,  where $U$ is a  subset of $\R^d$. In the latter case, on $\mathcal{S}^d \times U$ we have 
%\begin{align}\label{fstar1}
%\ol {F}(X,p)= F^\star(X,p):=\underset{\mathcal{S}^d \times  \R^d\setminus \ol{U} \ni (Y,q)\to (X,p)} \limsup F(Y,q)
%\end{align}
%and
%\begin{align}\label{fstar2}
%\underline {F}(X,p)=F_\star(X,p):=\underset{\mathcal{S}^d \times  \R^d\setminus \ol U \ni (Y,q)\to (X,p)} \liminf F(Y,q).
%\end{align}
\begin{align}\label{fstar1}
\ol {F}(X,p)= F^\star(X,p):=\underset{\mathcal{S}^d \times  \R^d \ni (Y,q)\to (X,p)} \limsup F(Y,q)
\end{align}
and
\begin{align}\label{fstar2}
\underline {F}(X,p)=F_\star(X,p):=\underset{\mathcal{S}^d \times  \R^d \ni (Y,q)\to (X,p)} \liminf F(Y,q).
\end{align}

\vs

When $F$ is continuous, that is, $U=\emptyset$ in \eqref{consistency}, the comparison principle is a classical fact in the theory of viscosity solutions; see, for example, see Theorem~8.3 in \cite{CIL}.
\vs

When $F$ has discontinuities, the comparison principle, if true, depends very much on the type of singularities. 
The folklore of the theory of viscosity solutions is that discontinuities can be dealt with by identifying and using an appropriate class of test functions which are consistent with the classical theory and ``resolve the discontinuities''. The latter means that $\overline F=\underline F$ when evaluated along the Hessians and gradients of the new test functions. 
\vs

When $F$ is discontinuous at $p=0$, as in, for example, \eqref{ex2.3.aa}, the comparison principle follows from the techniques developed by Chen, Giga and Goto \cite{CGG}, Evans and Spruck \cite{ES} and Ishii and Souganidis \cite{IS}.
When the singularities are at $p=0, p_1, \ldots, p_k$, the  comparison principle follows as in  Gurtin, Soner and Souganidis \cite{GSS}, Ohnuma and Sato \cite{OS} and Ishii \cite{I}. The last reference treats  some $F$'s with singularities of the type arising in this paper with the restriction that  the set of discontinuities is smooth,  which is not the case in dimensions higher than $2$.
%$M$ in \eqref{consistency} is a smooth manifold, which, is not the case in dimensions higher than $2$. 
\vs 

When $d\geq 3$, the typical $U$ arising in this paper  does not have smooth boundary and new arguments are needed. The necessary comparison in this generality was established recently by Morfe and the second author \cite{MS}. The key observation  of \cite{MS} is that gradient discontinuities  are  consistent with particular polyhedral Finsler norms in $\R^d$, like, for example, $l^1$. This allows to construct the correct test functions. Specific comments are made when necessary in the paper. More discussion, however,  on this subject is beyond the scope of the paper at hand.
\vs

\subsection{Convergence of the approximation scheme} 
Let $(S(\ep))_{\ep\in[0,1]}$ be a family of maps as in the previous subsection. 
Fix $T>0$. Given $u_0\in \mathcal B$ and $\ep \in [0,1]$, we assume that $u^\ep: \mathcal B\times [0,T] \to \R$ is such that 
\beq\label{scheme}
\bc
u^\ep (\cdot,t)=u_0 \ \text{if} \  t \in [0,\ep], \\[2mm]
u^\ep(\cdot, t)= S(\ep)u^\ep(\cdot, t -\ep) \ \text{if} \ t \in (\ep, T]. 
\ec
\eeq
The convergence result is stated next.

\begin{theorem}\label{thm1}
Assume \eqref{id}, \eqref{translation}, \eqref{monotonicity},  \eqref{consistency}, and \eqref{lh1}, and, for $u_0\in \textup{BUC}(\R^d)$, let $u^\ep$ be defined by \eqref{scheme}. 
Then, as $\ep \to 0$, $u^\ep  \to u \in \textup{BUC}(\R^d\times [0,T])$ locally uniformly in $\R^d\times [0,T]$, which is  
the unique solution of \eqref{ivp1}.
\end{theorem}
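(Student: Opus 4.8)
The plan is to follow the Barles–Souganidis half-relaxed limits method adapted to the possibly discontinuous $F$ appearing through \eqref{consistency}. First I would establish a uniform bound on the family $(u^\ep)$: since $u_0\in\buc(\R^d)$ is bounded, \eqref{translation} and \eqref{monotonicity} (equivalently, the contraction \eqref{contraction} of Crandall–Tartar) give $\|S(\ep)v-S(\ep)w\|\le\|v-w\|$, and comparing $u^\ep(\cdot,t)$ with the constants $\pm\|u_0\|$ (using $S(\ep)(\text{const})=\text{const}$, which follows from \eqref{id}–\eqref{translation} applied to $u=0$ together with $S(\ep)0=0$ after the normalization $\vp(0)=0$, or more generally from contraction) yields $\|u^\ep(\cdot,t)\|\le\|u_0\|$ for all $t\in[0,T]$, $\ep\in[0,1]$. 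With this uniform bound in hand I define the half-relaxed limits
\[
\ol u(x,t)=\limsup_{\substack{\ep\to0\\ (y,s)\to(x,t)}} u^\ep(y,s),\qquad
\underline u(x,t)=\liminf_{\substack{\ep\to0\\ (y,s)\to(x,t)}} u^\ep(y,s),
\]
which are, respectively, bounded upper and lower semicontinuous on $\R^d\times[0,T]$, and satisfy $\underline u\le\ol u$ everywhere by construction.

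The core step is to show that $\ol u$ is a viscosity subsolution and $\underline u$ a viscosity supersolution of \eqref{ivp1} in $\R^d\times(0,T]$, in the sense of \eqref{def}. I would do this for $\ol u$ (the argument for $\underline u$ is symmetric). Let $\phi\in C^2(\R^d)$, $g\in C^1((0,T])$, and let $(x_0,t_0)$ be a strict global maximum of $\ol u-\phi-g$ with $t_0>0$. By a standard argument one produces points $(x_\ep,t_\ep)\to(x_0,t_0)$ at which $u^\ep-\phi-g$ attains a local maximum and $u^\ep(x_\ep,t_\ep)\to\ol u(x_0,t_0)$; here one uses that $\ol u$ is the $\limsup^*$ of $u^\ep$ together with the uniform bound. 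Since $t_0>0$, for $\ep$ small we have $t_\ep>\ep$, so the scheme \eqref{scheme} applies: $u^\ep(x_\ep,t_\ep)=S(\ep)u^\ep(\cdot,t_\ep-\ep)(x_\ep)$. The local maximum property gives $u^\ep(\cdot,t_\ep-\ep)\le u^\ep(x_\ep,t_\ep)-\phi(x_\ep)-g(t_\ep-\ep)+\phi(\cdot)+g(t_\ep)$ near $x_\ep$ — a bit of care is needed because $S(\ep)$ only sees values on the finite stencil $x_\ep+\ep a$, $a\in A$ (hyperbolic) or $x_\ep+\sqrt\ep a$ (parabolic), which are close to $x_\ep$ for small $\ep$, so the local inequality does apply on the stencil. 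Applying $S(\ep)$, using \eqref{monotonicity} and then \eqref{translation}, I get
\[
u^\ep(x_\ep,t_\ep)\le S(\ep)\phi(x_\ep)+\big(u^\ep(x_\ep,t_\ep)-\phi(x_\ep)-g(t_\ep-\ep)\big)+g(t_\ep),
\]
which rearranges to
\[
\frac{g(t_\ep)-g(t_\ep-\ep)}{\ep}\le\frac{S(\ep)\phi(x_\ep)-\phi(x_\ep)}{\ep}.
\]
Letting $\ep\to0$: the left side tends to $g'(t_0)$; the right side is handled by \eqref{consistency}, splitting into the two cases according to whether $D\phi(x_0)\in\R^d\setminus\ol\R_+M$ or $D\phi(x_0)\in\ol\R_+M$. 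In the first case the limit equals $F(D^2\phi(x_0),D\phi(x_0))$; in the second case the $\limsup$ is $\le F^\star(D^2\phi(x_0),D\phi(x_0))$. Either way, $g'(t_0)\le F^\star(D^2\phi(x_0),D\phi(x_0))$, which is exactly \eqref{def}. Thus $\ol u$ is a BUSC subsolution; symmetrically $\underline u$ is a BLSC supersolution.

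Finally I must handle the initial condition. Using \eqref{takis1101.5}/\eqref{takis1103.5} — i.e. that $u^\ep(x,t)=u_0(\delta[x/\delta])$ for $t\in[0,\ep)$ with $\delta=\ep$ or $\sqrt\ep$ — and the uniform continuity of $u_0$ together with the remark that $u_0(\delta[x/\delta])\to u_0(x)$ uniformly, one checks by a barrier argument (comparing $u^\ep$ with $u_0(x_0)\pm\omega(|x-x_0|)\pm Ct$ for a suitable modulus $\omega$ and constant $C$ built from the consistency, or more simply using contraction against spatially shifted data) that $\ol u(\cdot,0)\le u_0\le\underline u(\cdot,0)$. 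Then the comparison principle \eqref{comp1} applied to the subsolution $\ol u$ and supersolution $\underline u$ gives $\ol u\le\underline u$ on $\R^d\times[0,T]$; combined with $\underline u\le\ol u$, we get $\ol u=\underline u=:u$, which is therefore continuous, a solution of \eqref{ivp1}, and — again by \eqref{comp1} — the unique BUSC/BLSC solution, hence the unique BUC solution. The equality $\ol u=\underline u$ together with the uniform bound upgrades the pointwise convergence to locally uniform convergence $u^\ep\to u$ by a standard compactness argument (Dini-type). I expect the main obstacle to be the initial-layer estimate, i.e. proving $\ol u(\cdot,0)\le u_0\le\underline u(\cdot,0)$ cleanly in the presence of the discontinuous $F$ and the two different scalings; this is where one must be careful to build barriers that are admissible test functions and whose consistency limits are controlled uniformly near $t=0$. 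The interior subsolution/supersolution argument is, by contrast, essentially the mechanical transcription of \cite{BS} once the consistency hypothesis \eqref{consistency} is invoked in its two-case form.
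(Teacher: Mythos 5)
Your proof is correct and follows the same Barles--Souganidis half-relaxed-limits route as the paper: uniform boundedness via \eqref{translation} and \eqref{monotonicity}, half-relaxed limits, the subsolution/supersolution property via the two-case consistency hypothesis \eqref{consistency}, and then the comparison principle \eqref{comp1} to collapse the upper and lower envelopes. You additionally make explicit the step that the paper's sketch leaves implicit, namely the initial-layer inequality $u^\star(\cdot,0)\le u_0\le u_\star(\cdot,0)$ needed before \eqref{comp1} can be invoked; flagging and sketching the barrier/contraction argument for that step is a fair completion of the argument, and your observation that the locality of the stencil lets one work with local rather than global maxima in the consistency step is likewise a legitimate clarification of what the paper (following \cite{BS}) takes for granted.
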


The proof of the theorem follows closely the arguments of the analogous theorem of \cite{BS}, thus we only sketch it next.
\begin{proof}[Sketch of proof of Theorem~\ref{thm1}]
Since $u_0$ is bounded, it  follows from \eqref{id}, \eqref{translation}, and \eqref{monotonicity} that the $u^\ep$'s are also bounded independently of $\ep$ in $\R^d\times [0,T]$. 
\smallskip

Hence, the local uniform upper and lower limits $u^\star \in \textup{BUSC}(\R^d\times [0,T])$ and $u_\star \in \textup{BLSC}(\R^d\times [0,T])$  of the $u^\ep$'s given respectively by 
\beq\label{limsup}
u^\star (x,t)=\underset{(y,s) \to (x,t), \ep \to 0} \limsup u^\ep(y,s)  \ \ \text{and} \ \ u_\star (x,t)=\underset{(y,s) \to (x,t), \ep \to 0} \liminf u^\ep(y,s)
\eeq
are well-defined. 
\vs

The goal is to show that $u^\star$ is a subsolution and $u_\star$ is a supersolution of \eqref{ivp1}. Then the assumed comparison principle, combined with the obvious inequality $u_\star\leq u^\star$, imply that 
$u^\star=u_\star$ and $u=u^\star=u_\star$ is the unique solution of \eqref{ivp1}, the latter being a consequence of the fact that $u$ is both subsolution and supersolution. 
The local uniform nature of the limits in  
\eqref{limsup} then yields the local uniform convergence of the $u^\ep$'s to $u$.
\vs
Since the arguments are similar, here we show only that $u^\star$ is a subsolution. For this, we assume that, for a given $\phi \in C^2(\R^d)$ and $g \in C^1(0,T]$, $(x_0,t_0) \in \R^d\times(0,T]$ is a strict global maximum of $u^\star-\phi -g$ in $\R^d\times(0,T]$. 
\vs

The definition of $u^\star$ and  some  calculus considerations  (see \cite[Proposition~4.3]{CIL}) yield $\ep_n\to 0$ and 
$(x_n,t_n) \in \R^d\times (\ep_n,T]$ such that $(x_n,t_n) \to (x_0,t_0)$, $u^{\ep_n}(x_n,t_n)\to u^\star(x_0,t_0)$ and $u^{\ep_n}-\phi-g$  achieves a global maximum at $(x_n,t_n)$.
\vs

\smallskip

Since, in view of \eqref{scheme}, 
\[u^{\ep_n}(x_n,t_n)=S(\ep_n)u^{\ep_n}(\cdot,t_n-\ep_n)(x_n)\]
and, for all $x\in \R^d$, 
\[u^{\ep_n}(x,t_n-\ep_n)-u^{\ep_n}(x_n,t_n)\leq \phi(x)-\phi(x_n) +g(t_n-\ep_n)-g(t_n),\]
\vs
it follows from the equivariance of translations by constants and the monotonicity of $S$ that 
\[S(\ep_n)[\phi-\phi(x_n)](x_n) +g(t_n-\ep_n)-g(t_n)\ge S(\ep_n) [u^{\ep_n}(\cdot,t_n-\ep_n)- u^{\ep_n}(x_n, t_n)](x_n)=0,\]
and, hence, 
\[
g'(t_n)\ep_n \le  S(\ep_n)[\phi-\phi(x_n)](x_n)+ \text{o}(\ep_n).
\]
\vs

Dividing by $\ep_n$ and letting $n\to \infty$ yields the subsolution property. The other inequality follows similarly.
\end{proof}

\section{Hyperbolic scaling examples}\label{hyperbolicsec}
We study here two models of deterministic surface growth with non-smooth driving function which are related to the deterministic version of directed last-passage percolation with driving function $\varphi((u_a)_{a\in A}) = \max_{a\in A} u_a$, which is the multidimensional analogue of a one-dimensional deterministic growth model considered by Krug and Spohn~\cite{ks88}.

\vs

We begin with the directed last-passage percolation model, which, at the discrete level, is given, for $(x,t)\in \Z^d\times\Z_+$, by 
\[
u(x, t+1)=\varphi ((u(x+a,t))_{a\in A}) = \max_{a\in A}  \ u(x+a,t).
\]

It is immediate that, if $u: \Z^d \to \R$ is linear, that is, $u(x)=p\cdot x$ for some $p\in \R^d\setminus \{0\}$, then 
\[ \underset {a\in A} \max \ p\cdot (x+a)= p \cdot x + \underset {a\in A} \max \ p\cdot a=p\cdot x+ \max_{i=1,\ldots,d} |p_i| \neq 0. \]
 
 Thus it is appropriate to use scale hyperbolically.  Note that  $\varphi (0)=0$. Moreover, the scheme is obviously equivariant under translations by constants and monotone, that is, for all $u, v: \Z^d\to \R$ such that $u\leq v$ and $k\in \R$, 
 \[\varphi ((u(x+a) +k)_{a\in A})=\varphi ((u(x+a))_{a\in A}) +k \ \ \text{and} \ \  \varphi ((u(x+a))_{a\in A})\leq \varphi ((v(x+a))_{a\in A}).\]
 
\vs
Following section~\ref{setup}, for $\ep>0$, $u\in \mathcal B$, and $x\in \R^d$, we define
\[
S(\ep)u(x)=\varphi ((u(x+\ep a))_{a\in A}),
\]
and note that   \eqref{id}, \eqref{translation} and \eqref{monotonicity} are satisfied.
\vs

Next we  check  the consistency. We fix $\phi$ smooth and look at the $\ep\to 0$ and $y\to x$ limit of  
\[\dfrac{S(\ep)\phi(y)-\phi (y)}{\ep}= \dfrac{\varphi ( (\phi(y+\ep a))_{a\in A})-\phi (y)}{\ep}.\]
It is immediate that
 \[
 \begin{split}
 \dfrac{\varphi ((\phi(x+\ep a))_{a\in A})-\phi (x)}{\ep}&= \frac{1}{\ep} \max_{a\in A} (\phi(y+\ep a)-\phi (y))\\[1.5mm]
 &= \frac{1}{\ep}\max_{i=1,\ldots, d} (\pm \phi_{x_i}(y) \ep + \text{o}(\ep^2))\\[1.5mm]
 &=   \underset {i=1,\ldots, d}\max |\phi_{x_i} (y)| + \text{o}(\ep) \underset{y\to x, \ep \to 0}\to \underset {i=1,\ldots, d} \max |\phi_{x_i} (x)|.
 \end{split}
 \]

Let $H=H(p):\R^d\to \R$ be given by 
\beq\label{takis1202.2}
H(p)= \underset {i=1,\ldots, d} \max |p_i|.
\eeq
 
Since $H\in C(\R^d)$, it follows from \cite{CIL}, that, for each $T>0$, the initial value problem
\beq\label{takis1203}
u_t=H(Du) \ \text{in} \ \R^d \times [0,T] \ \ \ u(\cdot,0)=u_0
\eeq  
admits a comparison principle in $\text{BUC}(\R^d\times [0,T])$, and, thus,  \eqref{lh1} is satisfied. Moreover, for each $u_0\in \text{BUC}(\R^d)$, \eqref{takis1203} has a unique solution $u\in \text{BUC}(\R^d\times [0,T])$.
\vs

We have proved the following theorem. 
%\begin{theorem}\label{hyper1}
%For  $u_0\in \textup{BUC}(\R^d)$, there is a unique solution $u\in \textup{BUC}(\R^d\times [0,T])$ of \eqref{takis1203} with $H$ as in \eqref{takis1202.2} and initial data $u_0$. If $u^\ep:\R^d\times [0,T] \to \R$ is defined by \eqref{takis1104.1}, then, as $\ep\to 0$ and locally uniformly in $\R^d\times [0,T]$, $u^\ep\to u$.
%\end{theorem}
\begin{theorem}\label{hyper1}
If $u^\ep:\R^d\times [0,T] \to \R$ is defined by \eqref{takis1104.1} starting with $u_0\in \textup{BUC}(\R^d)$,  then, as $\ep\to 0$ and locally uniformly in $\R^d\times [0,T]$, $u^\ep\to u$, the 
unique solution $u\in \textup{BUC}(\R^d\times [0,T])$ of \eqref{takis1203} with $H$ as in \eqref{takis1202.2} and initial data $u_0$.
\end{theorem}

As mentioned above, this is also a scheme proposed in \cite{ks88} to obtain at the limit a deterministic KPZ-type nonlinearity with sublinear growth, that is, the PDE
\[u_t=\Delta u +|Du|.\]
It is clear, in view of Theorem~\ref{hyper1}, that  this conjecture in \cite{ks88} is not possible even when $d=1$ in which case, of course, $H(p)=|p|.$ 

\vs

We consider next  another scheme which, at the discrete level, is given, for $(x,t)\in \Z^d\times\Z_+$, by 
\beq\label{1300}
u(x, t+1)=\varphi ((u(x+a,t))_{a\in A})= u(x,t) + \df{1}{2d}\underset {a\in A} \sum (u(x+a,t)-u(x,t))_+.
\eeq
This is a variant of example (1.5) from \cite{C}, where $(u(x+a)-u(x))_+$ is replaced by $q(u(x+a)-u(x))$ for an increasing  $C^2$ function $q$. Parabolic scaling applies in that case, and the limit is the deterministic KPZ equation. As we will see below, that is no longer the case for this variant, a fact that stresses the consequences of the lack of differentiability of the driving function.

\vs 
The fact that  $\varphi (0)=0$ as well as the equivariance under translations by constants and the monotonicity are immediate. 
\vs

As above, for $\ep>0$ and $u\in \mathcal B$, we define
\[S(\ep)u(x)= \varphi ((u(x+\ep a))_{a\in A}).\]

It is immediate that   \eqref{id}, \eqref{translation} and \eqref{monotonicity} are  satisfied.
\vs

To check  \eqref{consistency}, we fix  $\phi:\R^d\to \R$ smooth and look at the $\ep \to 0$ and $y\to x$ limit of 
\[
\df{S(\ep)\phi(y)-\phi(y)}{\ep}=\df{\varphi ((\phi(y+\ep a))_{a\in A})-\phi (y)}{\ep}.
\]

A straightforward computation and \eqref{1300} yield the following string of equalities and limits.
\[
\bs
\df{\varphi ((\phi (y+\ep a))_{a\in A})-\phi (y)}{\ep} &=
\df{\phi(y) + \df{1}{2d}\underset {a\in A} \sum (\phi(y+\ep a)-\phi(y))_+ -\phi(y)}{\ep}\\[1.5mm]
&=\df{1}{2d}\underset {a\in A} \sum (D\phi(y)\cdot a +\text{o}(\ep))_+ \ \underset{y\to x, \ep\to 0}\to \ \df{1}{2d} \sum_{i=1}^d (\phi_{x_i} (x))_+.
\end{split}
\]

Let $H=H(p):\R^d\to \R$ be given by 
\beq\label{takis1302}
H(p)=\df{1}{2d} \sum_{i=1}^d (p_i)_+.
\eeq

Since $H\in C(\R^d)$, the initial value problem 
\[
u_t=H(Du) \ \text{in} \ \rt, \ \ \ u(\cdot,0)=u_0
\]
admits a comparison principle in $\text{BUC}(\R^d\times [0,T])$, and, thus,  \eqref{lh1} is satisfied. Moreover, for each $u_0\in \text{BUC}(\R^d)$, \eqref{takis1203} has a unique solution $u\in \text{BUC}(\R^d\times [0,T])$.
\vs

We thus proved the following convergence result. 
%\begin{theorem}\label{hyper3}
%For  $u_0\in \textup{BUC}(\R^d)$, there is a unique solution $u\in \textup{BUC} (\R^d\times [0,T])$ of \eqref{takis1203} with $H$ as in \eqref{takis1302} and initial data $u_0$. If $u^\ep:\R^d\times [0,T] \to \R$ is defined by \eqref{takis1104.1}, then, as $\ep\to 0$ and locally uniformly in $\R^d\times [0,T]$, $u^\ep\to u$.
%\end{theorem}
\begin{theorem}\label{hyper3}
For  $u_0\in \textup{BUC}(\R^d)$ and let  $u\in \textup{BUC} (\R^d\times [0,T])$ be the unique solution of \eqref{takis1203} with $H$ as in \eqref{takis1302} and initial data $u_0$. If $u^\ep:\R^d\times [0,T] \to \R$ is defined by \eqref{takis1104.1}, then, as $\ep\to 0$ and locally uniformly in $\R^d\times [0,T]$, $u^\ep\to u$.
\end{theorem}

\section{Parabolic scaling examples}\label{parabolicsec}

We divide this section into two subsections depending on the regularity and properties of the generating function $\varphi$. In the first we generalize and give a much simpler proof of  the result of \cite{C}. The second is about new results concerning zero temperature dynamics of gradient Gibbs measures.  This subsection is also divided into three parts depending on the behavior of the underlying potential. 

\subsection{Generalization of deterministic KPZ-type models}
We provide  an extension of the main result of \cite{C}  by obtaining a generalized deterministic KPZ scaling limit under the assumptions that the height function  is equivariant under constant shifts, monotone, and twice continuously differentiable. 
The class of PDEs obtained in the limit contain as a very particular case the classical deterministic KPZ equation. 
\vs

We assume that the evolution of a deterministically growing $d$-dimensional surface 
$u:  \Z^d \times \Z_+ \to \R$  is   as in \eqref{takis1100} with $\varphi$ given, for each $x \in  \Z^d$ and $v:\Z^d\to \R$, by  
\[
\varphi((v(x+a))_{a\in A})= v(x) + \Phi (v(x \pm e_1) -v(x), \ldots, v(x \pm e_d) -v(x)),
\]
where 
\beq\label{ex2}
\Phi \in C^2(\R^{2d}) \ \ \text{and} \ \ \Phi(0,\ldots,0)=0.
\eeq
The assumption that $\Phi(0,\ldots,0)=0$ is made only to simplify the presentation, since, as discussed earlier, we can always work with 
$\tilde \Phi =\Phi -\Phi(0,\ldots,0)$. We leave the details to the reader. 
\vs

We also assume that, for each $i=1,\ldots,d$,
\beq\label{ex1.5ab}
\Phi_{v_i}(0,\ldots,0) = \Phi_{v_{-i}}(0,\ldots,0),
\eeq
where $\Phi_{v_i}$ is shorthand for $\partial \Phi/\partial v_i$.
This is a relaxation of the ``invariance under lattice symmetries'' assumption from \cite{C}. 
\vs

We note that \eqref{ex1.5ab} is necessary to allow us to consider a parabolic scaling. As discussed earlier, this is related to the fact that for such a scaling we need to have, for all $p\in \R^d$, 
\[\lim_{\ep \to 0} \df{\varphi((p\cdot (x+\ep a))_{a\in A})-p\cdot x}{\ep}=0,\]
which follows from \eqref{ex1.5ab}. 
\vs

After rescaling and using the setup discussed in Subsection~\ref{setup} we define the scheme 
\beq\label{ex1.1}
S(\ep)v(x)= v(x) +   \Phi (v(x \pm \sqrt {\ep} e_1) -v(x), \ldots, v(x \pm \sqrt {\ep} e_d) -v(x)).
\eeq

It is immediate that 
$$ S(0)v=v \ \text{and} \ S(\ep)(v+k)= S(\ep)v +k,$$ 
and, hence, \eqref{id} and \eqref{translation} are  satisfied. 
\smallskip

For the monotonicity, it is enough to show that the map 
\[
(v_0, v_{\pm1},\ldots, v_{\pm d}) \mapsto  \tilde S(v_0, v_{\pm1},\ldots, v_{\pm d}):=v_0 + \Phi (v_{\pm 1}-v_0, \ldots, v_{\pm d}-v_0)
\]
is monotone with respect to all its argument. And for this, we need that for all $(v_0, v_{\pm1},\ldots, v_{\pm d})$ and $ i=1,\ldots, d ,$ 
\[\dfrac{\partial \tilde S}{\partial v_0}(v_0, v_{\pm1},\ldots, v_{\pm d})\geq 0 \ \ \text{and} \ \ \dfrac{\partial \tilde S}{\partial v_{\pm i}}(v_0, v_{\pm1},\ldots, v_{\pm d}) \geq 0. \]

It is immediate that 
\[\dfrac{\partial \tilde S}{\partial v_0}(v_0, v_{\pm1},\ldots, v_{\pm d})=1 - \DIV \Phi (v_{\pm 1}-v_0, \ldots, v_{\pm d}-v_0), \]
and, for each $i=1,\ldots,d$,   
\[\dfrac{\partial \tilde S}{\partial v_{\pm i}}(v_0, v_{\pm1},\ldots, v_{\pm d})=  \Phi_{v_{\pm i}}(v_{\pm1}-v_0,\ldots, v_{\pm d}-v_0).\]
 
Thus, the monotonicity of $S$ at large is equivalent to the assumptions that, for  all $v_{\pm1},\ldots, v_{\pm d}\in \R$,
\beq\label{ex1.3a}
1-\DIV \Phi(v_{\pm1},\ldots, v_{\pm d}) \ge 0 \ \ \text{and} \ \  \Phi_{v_{\pm i}} (v_{\pm1},\ldots, v_{\pm d})\geq 0. 
\eeq

As an aside, we remark that instead of  \eqref{ex1.3a},  we may assume 
\beq\label{ex1.3ab}
1-\DIV \Phi(0,\ldots, 0) > 0 \ \ \text{and} \ \  \Phi_{v_{\pm i}} (0,\ldots, 0) >0. 
\eeq
Indeed, recall that \eqref{contraction} preserves the Lipschitz continuity of the scheme. Thus, if we assume that $u_0$ is bounded and Lipschitz continuous, then the scheme generates a bounded and Lipschitz continuous $u^\ep$. It is then immediate that, for all $i=1,\ldots,d$,
\beq\label{takisps1}
u^\ep(x \pm \sqrt\ep e_i,t)- u^\ep(x) =\text{O}(\sqrt \ep),
\eeq
with $\text{O}$ depending only on the Lipschitz constant of $u_0$.
\vs  
If $\phi:\R^d\to \R$ is a smooth function, we also have, for all $i=1,\ldots,d$,
\beq\label{takisps2}
\phi(x\pm \sqrt\ep e_i,t)- \phi(x) =\text{O}(\sqrt \ep),
\eeq
with $\text{O}$ depending only on the Lipschitz constant of $\phi$.
\vs
Looking back at the proof of Theorem~\ref{thm1} we see that the monotonicity of the scheme is only used to replace terms like $u^\ep(x\pm \sqrt\ep e_i,t)- u^\ep(x,t) $ by terms like $\phi(x\pm \sqrt\ep e_i,t)- \phi(x)$, which, in view of \eqref{takisps1} and \eqref{takisps2}, are of (uniform) order $\sqrt \ep$. It suffices then to have the monotonicity of the $\Phi$ only in a $\sqrt \ep$-neighborhood of $0$. Hence, it suffices to assume \eqref{ex1.3ab}. 
\vs
Once we have convergence for bounded and Lipschitz continuous $u_0$, the result for $u_0\in \text{BUC}(\R^d)$ follows again from the contraction property of the scheme and the limit problem and an elementary density argument.

\vs

To make what follows easier to read, let
\beq\label{Phidef}
\Phi^{\ep}(y) := \Phi(\phi(y \pm \sqrt{\ep}e_1) - \phi(y), \ldots, \phi(y \pm \sqrt{\ep} e_d) - \phi(y))=  S(\ep)\phi(y) - \phi(y).
\eeq

Taylor's  expansion, for $\ep$ small  and uniformly in $y$ near  $x$, gives 
\begin{align*}
\Phi^{\ep}(y) &= \sum_{i=1}^d[ \Phi_{v_i}(0,\ldots,0) (\phi(y + \sqrt{\ep}e_i) - \phi(y)) + \Phi_{v_{-i}}(0,\ldots,0) (\phi(y - \sqrt{\ep}e_i) - \phi(y))]\\
&\qquad + \frac{1}{2} \sum_{i=1}^d\sum_{j=1}^d \Phi_{v_iv_j}(0,\ldots,0) (\phi(y + \sqrt{\ep}e_i) - \phi(y))(\phi(y + \sqrt{\ep}e_j) - \phi(y))\\
&\qquad + \frac{1}{2} \sum_{i=1}^d\sum_{j=1}^d \Phi_{v_{-i}v_{-j}}(0,\ldots,0) (\phi(y - \sqrt{\ep}e_i) - \phi(y))(\phi(y -  \sqrt{\ep}e_j) - \phi(y))\\
&\qquad +  \sum_{i=1}^d\sum_{j=1}^d \Phi_{v_iv_{-j}}(0,\ldots,0) (\phi(y + \sqrt{\ep}e_i) - \phi(y))(\phi(y - \sqrt{\ep}e_j) - \phi(y)) + \text{O}(\ep^{3/2}), 
\end{align*}
and 
\begin{align*}
\phi(y \pm \sqrt{\ep}e_i) - \phi(y) &= \pm \sqrt{\ep}\phi_{x_i}(y) + \frac{\ep}{2}\phi_{x_ix_i}(y) + \text{O}(\ep^{3/2}). 
\end{align*}
Substituting the last expression  in the previous display, and using \eqref{ex1.5ab}, we get
\begin{align*}
&\lim_{y\to x, \ep \to 0} \df{\Phi^{\ep}(y)}{\ep} =  \sum_{i=1}^d \Phi_{v_i}(0,\ldots,0)\phi_{x_ix_i}(x) + \frac{1}{2} \sum_{i=1}^d\sum_{j=1}^d \Phi_{v_iv_j}(0,\ldots,0) \phi_{x_i}(x)\phi_{x_j}(x)\\
&\qquad + \frac{1}{2} \sum_{i=1}^d\sum_{j=1}^d \Phi_{v_{-i}v_{-j}}(0,\ldots,0) \phi_{x_i}(x)\phi_{x_j}(x) -  \sum_{i=1}^d\sum_{j=1}^d \Phi_{v_iv_{-j}}(0,\ldots,0) \phi_{x_i}(x)\phi_{x_j}(x).
\end{align*}
Note that, if \eqref{ex1.3a} is assumed,  then the map 
\beq\label{ex1.6a1}
M \mapsto \sum_{i=1}^{d}  \Phi_{v_i}(0,\ldots,0) M_{ii}  \ \ \text{is  degenerate  elliptic  in $\mathcal S^d$,}
\eeq
while, when \eqref{ex1.3ab} holds, 
\beq\label{ex1.6a}
M \mapsto \sum_{i=1}^{d}  \Phi_{v_i}(0,\ldots,0) M_{ii} \ \ \text{is  uniformly elliptic in $\mathcal S^d$.}
\eeq

Let the Hamiltonian $H=H(p)=H(p_{1},\ldots, p_{d}):\R^{d}\to \R$ and the matrix 
 $A=(A_{ij})_{i,j=1,\ldots, d}\in \mathcal{S}^d$ be defined by 
\beq\label{ex1.81}
\begin{split}
A_{ij}&=\delta_{ij}\Phi_{v_i}(0,\ldots,0) \ \text{and} \\
H(p)&=\dfrac{1}{2}\sum_{i=1}^d\sum_{j=1}^d \Big(\Phi_{v_iv_j}(0,\ldots,0) + 
\Phi_{v_{-i}v_{-j}}(0,\ldots,0) - 2 \Phi_{v_iv_{-j}}(0,\ldots,0)\Big) p_ip_j.
\end{split}
\eeq
Clearly $H\in C(\R^d)$ and, in view of \eqref{ex1.6a1} and \eqref{ex1.6a},  the matrix $A$ is either degenerate  elliptic or uniformly elliptic. It then follows from \cite{CIL} that the initial value problem  
\beq\label{ex1.9a}
u_t=\text{trace}(AD^2u) + H(Du) \  \text{in} \ \R^d\times (0,T] \ \ \ u(\cdot,0)=u_0
\eeq
has, for every $u_0\in \text{BUC}(\R^d)$, a unique solution $u\in \text{BUC}(\R^d\times [0,T])$, which is classical for $t>0$ if \eqref{ex1.3ab} holds, and, moreover, satisfies \eqref{lh1}. 
\vs

Note that the well-posedeness of \eqref{ex1.9a} is a standard fact in the theory of viscosity solutions, hence, we omit the details and instead we refer to Theorem~8.2 in \cite{CIL}. 
\vs

Collecting all the hypotheses above and using Theorem~\ref{thm1} we have now the following theorem which extends the 
corresponding result in \cite{C}. 

\begin{theorem}\label{thm2}
Assume \eqref{ex2},  \eqref{ex1.5ab}, and either \eqref{ex1.3a} or  \eqref{ex1.3ab}. Then the scheme defined using \eqref{ex1.1} converges locally uniformly, as $\ep \to 0$, to the unique solution of \eqref{ivp1} with $F$ as in \eqref{ex1.81}.
\end{theorem}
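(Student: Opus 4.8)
The plan is to obtain Theorem~\ref{thm2} as a direct application of Theorem~\ref{thm1}: the whole task is to check the hypotheses \eqref{id}, \eqref{translation}, \eqref{monotonicity}, \eqref{consistency} and \eqref{comp1} for the family $(S(\ep))_{\ep\in[0,1]}$ defined by \eqref{ex1.1} and for the operator $F$ read off from \eqref{ex1.81}. Assumptions \eqref{id} and \eqref{translation} are immediate, since $S(0)v=v$ and, because $\Phi$ is evaluated at differences of heights, $S(\ep)(v+k)=S(\ep)v+k$. For \eqref{monotonicity} under hypothesis \eqref{ex1.3a}, I would argue as in the paragraph following \eqref{ex1.2}: monotonicity of $S(\ep)$ reduces to monotonicity of the finite-dimensional map $\tilde S$ of \eqref{ex1.2} in each of its arguments, and the displayed formulas $\partial\tilde S/\partial v_0=1-\DIV\Phi$ and $\partial\tilde S/\partial v_{\pm i}=\Phi_{v_{\pm i}}$ show that this holds precisely when \eqref{ex1.3a} does.

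For the consistency \eqref{consistency}, I would invoke the Taylor expansion already carried out: writing $\Phi^\ep(y)=S(\ep)\phi(y)-\phi(y)$ for $\phi\in C^2(\R^d)$, expanding each increment as $\phi(y\pm\sqrt\ep e_i)-\phi(y)=\pm\sqrt\ep\,\phi_{x_i}(y)+\tfrac{\ep}{2}\phi_{x_ix_i}(y)+\text{O}(\ep^{3/2})$ and each factor of $\Phi$ to second order about $0$, and using \eqref{ex1.5ab} to cancel the otherwise divergent first-order term, one gets
\[
\lim_{y\to x,\ \ep\to 0}\frac{S(\ep)\phi(y)-\phi(y)}{\ep}=\text{trace}(AD^2\phi(x))+H(D\phi(x))=:F(D^2\phi(x),D\phi(x)),
\]
with $A$ and $H$ as in \eqref{ex1.81}. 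Since $A$ is a constant matrix and $H\in C(\R^d)$, the map $F(X,p)=\text{trace}(AX)+H(p)$ is continuous on $\mathcal S^d\times\R^d$, so $M=\emptyset$ in \eqref{consistency} and $F^\star=F_\star=F$, while degenerate ellipticity of $F$ is exactly the statement \eqref{ex1.6a1}. Finally, \eqref{comp1} together with the existence and uniqueness of $u\in\buc(\R^d\times[0,T])$ solving \eqref{ex1.9a} follows from the classical theory of second-order degenerate parabolic equations with continuous, $x$-independent nonlinearity (Theorems~8.2 and~8.3 in \cite{CIL}). Theorem~\ref{thm1} then delivers the asserted local uniform convergence.

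The one genuinely delicate point, and the step I expect to be the main obstacle, is that under the weaker hypothesis \eqref{ex1.3ab} the scheme \eqref{ex1.1} is monotone only in a $\sqrt\ep$-neighbourhood of the constants, so Theorem~\ref{thm1} does not apply verbatim. The remedy, sketched after \eqref{ex1.3ab}, is a two-step localization. First, for $u_0$ bounded and Lipschitz, the contraction property \eqref{contraction} (available because near $0$ the map $\tilde S$ is monotone, hence nonexpansive) propagates a Lipschitz bound to $u^\ep(\cdot,t)$ for every $t$; then each increment entering the proof of Theorem~\ref{thm1}, namely $u^\ep(x\pm\sqrt\ep e_i,t)-u^\ep(x,t)$ and $\phi(x\pm\sqrt\ep e_i)-\phi(x)$, is $\text{O}(\sqrt\ep)$ by \eqref{takisps1}, hence lies in the neighbourhood where \eqref{ex1.3ab} supplies the monotonicity actually used, and the convergence argument goes through unchanged. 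Second, one removes the Lipschitz restriction by approximating $u_0\in\buc(\R^d)$ uniformly by Lipschitz functions and using the contraction property of both $S(\ep)$ and the solution operator of \eqref{ex1.9a}. Checking that the scheme never probes outside the region where \eqref{ex1.3ab} holds is the only part that needs care; the remainder is the Taylor bookkeeping already displayed.
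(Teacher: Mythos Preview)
Your proposal is correct and follows essentially the same approach as the paper: the paper does not write a separate proof for Theorem~\ref{thm2} but simply states ``Collecting all the hypotheses above and using Theorem~\ref{thm1}'', and the preceding discussion checks \eqref{id}, \eqref{translation}, \eqref{monotonicity}, \eqref{consistency} and \eqref{comp1} in exactly the order and manner you describe, including the Taylor bookkeeping leading to \eqref{ex1.81} and the two-step localization (Lipschitz data first, then density via contraction) used to handle the weaker hypothesis \eqref{ex1.3ab}. Your identification of the \eqref{ex1.3ab} case as the only delicate point, and your remedy for it, match the paper's own remarks after \eqref{takisps1}--\eqref{takisps2} essentially verbatim.
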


\subsection{Zero temperature dynamics of gradient Gibbs measures}\label{gibbs}
In this subsection, we study  schemes  generated by  a different class of growth models in which the location of the growing surface is determined by minimizing the total potential energy between the point and its neighbors. This is motivated by Glauber dynamics for gradient Gibbs measures at zero temperature. 
\vs

Formally, a gradient Gibbs measure is probability measure on $\R^{\Z^d}$ with probability density proportional to 
\begin{align*}
\exp\biggl(-\beta \sum_{x,y\in \Z^d, |x-y|=1} V(h(x)-h(y))\biggr),
\end{align*}
where  $\beta$ is the inverse temperature parameter, and  $V$  is a potential function, often assumed to be convex and symmetric. We will assume that $V$ is convex and symmetric throughout this subsection. For background on gradient Gibbs measures, see \cite{bk07, sheffield05}.
\vs

The {Glauber dynamics} for a gradient Gibbs measure as above proceeds by updating the height $h(x)$ at a site $x$ by regenerating $h(x)$ from the conditional distribution given the heights at neighboring points. When the temperature is zero, or, in other words, $\beta=\infty$, the Glauber dynamics simply chooses $h(x)$ that minimizes 
\[
\sum_{i=1}^d V(h(x)-h(x\pm e_i)). 
\]
If $V$ has a strict  minimum  and is  differentiable there, then the minimizing problem has a unique solution. Hence, the zero temperature dynamics becomes fully deterministic. We will investigate this below. 
\vs
When $V$ either has a strict minimum but is not differentiable there, or the minimum is not strict, the set of minimizers may be an interval of positive length. In this case, the zero temperature dynamics will choose a point uniformly from this interval. To avoid this randomness, we will simply choose the midpoint of the interval as the updated height. We discuss two such examples later.
\vs

In the language of Section \ref{setup}, the driving function $\varphi:\R^A\to \R$  is 
\[
\varphi((u_a)_{a\in A})= \text{argmin}_{y\in \R}  \sum_{a\in A} V(y-u_a),
\]
where 
\beq\label{v.a}
V:\R\to \R \ \text{is a symmetric convex potential with minimum  $0$ at $y=0$.}
\eeq

As mentioned above, depending on the properties of $V$, for each $(u_a)_{a\in A}$,  the minimum value of the map $y\mapsto   \sum_{a\in A} V(y-u_a)$ can be achieved at either a single point or a closed interval. In the former case, $\text{argmin}$ has its usual meaning. In the latter, $\text{argmin}$ is taken to be the middle point of the interval of minima.
\vs  

Next, we show that parabolic scaling is the correct scaling to study the asymptotic behavior. For this, we need to show that, for any $p\in \R^d$, as $\ep\to 0$,
\beq\label{takis1310}
\df{\text{argmin}_{y\in \R} \sum_{i=1}^d V(y-p\cdot (x \pm \ep e_i)) -p\cdot x}{\ep} \to 0.
\eeq

To show \eqref{takis1310}, we begin with the elementary observation that, for any  $v:\R^d \to \R$,
\[\text{argmin}_{y\in \R} \left[ \sum_{i=1}^d V(y-v(x \pm \ep e_i))\right] -v(x)= \text{argmin}_{y\in \R} \sum_{i=1}^d V(y-(v(x \pm \ep e_i)-v(x))).\]

Next we observe that, since $V$ is convex and even, so is the map 
\[y\mapsto \sum_{i=1}^d V(y-(p\cdot (x \pm \ep e_i)-p\cdot x ))=\sum_{i=1}^d V(y\pm \ep p_i).\]
and thus 
\[\text{argmin}_{y\in \R}  \sum_{i=1}^d V(y-p\cdot (x \pm \ep e_i)) -p\cdot x=0.\]
Finally, to prove the convergence we introduce the maps $(S(\ep))_{\ep\in [0,1]}$, which, for $u\in \mathcal B(\R^d)$ and $\ep>0$, is given by 
\beq\label{ex2.2}
S(\ep)\phi(x)=\phi(x) + \left[\Phi (\phi(x\pm \sqrt{\ep} e_1),\ldots, \phi(x\pm \sqrt{\ep} e_d)) - \phi(x)\right],
\eeq
where, given $v_{\pm i}$ for $i=1,\ldots,d$, 
\[
\Phi (v_{\pm1}, \ldots, v_{\pm d})= \text{argmin}_{y \in \R} \left[ \sum_{i=1}^d (V(y -v_{-i}) + V(y-v_{i}))\right].
\]

In view of \eqref{v.a}, it is immediate that $S(\ep)$ satisfies \eqref{id}, \eqref{translation} and \eqref{monotonicity}. 
\vs

The consistency is more complicated and depends on the regularity of $V$ at $0$ and on whether 
$0$ is a strict minimum or not. There are three different cases and we study each one separately.
\vs

\subsubsection{\bf ``Smooth'' potentials with strict minimum}  In addition to \eqref{v.a},  we assume that 
\beq\label{vsmooth.aa}
\bc
V \ \text{is twice differentiable in a neighborhood of  $0$, which is a strict minimum, and}\\[1.2mm]
\text{there exists $\sigma >0$ and $\alpha \neq 0$ such that $\underset{|y|\to 0}\lim \dfrac{V^{(2)}(y)}{|y|^\sigma}=\alpha,$} 
\ec
\eeq
where, for $l\in \Z_+$, $V^{(l)}$ denotes the $l$-th derivative if it exists.
\vs
For future reference, we record here the elementary fact that, in view of \eqref{vsmooth.aa}, for each $\theta>0$ and $|y|$ sufficiently small,
\beq\label{hp5}
(\alpha -\theta) |y|^\sigma \leq V^{(2)}(y) \leq (\alpha + \theta)  |y|^\sigma. 
\eeq
An example of a potential satisfying  \eqref{vsmooth.aa} is a  $V:\R\to [0,\infty)$ such that  \beq\label{vsmooth1aa}
\bc
\text{there exists an even number $k\ge 2$ such that $V\in C^{k+1}$,  $V^{(k)}(0)>0$}, \\[1.5mm] 
\text{and $V^{(1)}(0)=\cdots=V^{(k-1)}(0)=0.$ }
\ec
\eeq
Since $V$ is even, all the odd derivatives at $0$, if they exist, vanish. Hence, \eqref{vsmooth1aa} is about the existence of the $k$. 
\vs
Another example is the potential 
\beq\label{athens11}
V(x)=|x|^{2+\delta} \ \  \text{for some $\delta \in (0,1)$.} 
\eeq
In view of the definition of the scheme generating $\Phi$, for $\phi$ smooth, we have 
\[
\sum_{i=1}^d V^{(1)}( \Phi(\phi (x \pm \sqrt{\ep} e_1)-\phi(x), \ldots, \phi (x \pm \sqrt{\ep}e_d)-\phi(x))=0
\]
In what follows, to simpliy the notation, we write, as in \eqref{Phidef},
\begin{align*}
\Phi^{\ep}(y) := \Phi (\phi (y \pm \sqrt{\ep} e_1)-\phi(y), \ldots, \phi (y \pm \sqrt{\ep}e_d)-\phi(y)),
\end{align*}
We study next the consistency of the scheme which follows from the asymptotic behavior of $\ep^{-1}\Phi^{\ep}(y)$ as $\ep \to 0$ and $y$ converges to some $x$. The main result needs the following lemma, which is proven at the end of the ongoing subsection.
\vs  
\begin{lemma}\label{takis11111}
 Assume \eqref{v.a} and \eqref{vsmooth.aa}. Then, for any $x\in \R^d$, 
 \[
\frac{1}{2}\min_i \phi_{x_ix_i}(x)\le \liminf_{y\to x, \ep \to 0} \frac{\Phi^{\ep}(y)}{\ep}  \le \limsup_{y\to x, \ep \to 0} \frac{\Phi^{\ep}(y)}{\ep}\le \frac{1}{2}\max_i \phi_{x_ix_i}(x).
 \]
Moreover, if  $D\phi(x)\neq 0$, then
 \beq\label{ex2.2b.aa}
 \lim_{y\to x, \ep \to 0} \frac{\Phi^{\ep}(y)}{\ep} =  \dfrac{\sum_{i=1}^d |\phi_{x_i}(x)|^{\sigma} \phi_{x_ix_i}(x)}{ 2\sum_{i=1}^d |\phi_{x_i}(x)|^{\sigma}}.
\eeq
\end{lemma}
When the potential is as in \eqref{vsmooth1aa}, then \eqref{ex2.2b.aa} reads as
\[\lim_{y\to x, \ep \to 0} \frac{\Phi^{\ep}(y)}{\ep} =  \dfrac{\sum_{i=1}^d |\phi_{x_i}(x)|^{k-2} \phi_{x_ix_i}(x)}{ 2\sum_{i=1}^d |\phi_{x_i}(x)|^{k-2}},\]
while, if $V$ is as in \eqref{athens11},
\[\lim_{y\to x, \ep \to 0} \frac{\Phi^{\ep}(y)}{\ep} =  \dfrac{\sum_{i=1}^d |\phi_{x_i}(x)|^{\delta} \phi_{x_ix_i}(x)}{ 2\sum_{i=1}^d |\phi_{x_i}(x)|^{\delta}}.\]
\vs
Next we introduce the equation satisfied by the limit of the scheme.
\vs
Let $F:\mathcal{S}^d\times (\R^d\setminus \{0\}) \to \R$ be given by 
\beq\label{ex2.3.aa}
F(X,p)= \dfrac{\sum_{i=1}^d |p_i|^{\sigma} X_{ii}}{ 2\sum_{i=1}^d |p_i|^{\sigma}}.
\eeq
It is immediate that $F\in C(\mathcal{S}^d\times (\R^d\setminus \{0\}))$ is degenerate elliptic. Moreover, since, in view of \eqref{ex2.3.aa}, for each $p\ne 0$ and each $X$, $F(X,p)$ is a weighted average of $X_{11},\ldots,X_{dd}$, it is immediate that if  $(Y_n,q_n)\in\mathcal{S}^d \times (\R^d\setminus\{0\}) \to (X,0)$, then 
\[\frac{1}{2} \min_i X_{ii} \le  \liminf _{n\to \infty}F(Y_n,q_n) \le \limsup_{n\to \infty} F(Y_n,q_n) \le \frac{1}{2} \max_i X_{ii}.\]
Finally, with appropriate choices of $q_n$, it is easy to see that equality can be attained in both cases.
Thus, for $F^\star$ and $F_\star$ defined as in \eqref{fstar1} and \eqref{fstar2}, we have
\beq\label{newFstar}
F^\star(X,0)= \frac{1}{2} \max_i X_{ii} \ \ \text{ and } \ \ F_\star(X,0)=\frac{1}{2} \min_i X_{ii}.
\eeq
It follows from Theorem~5 in \cite{MS} that the initial value problem  \eqref{general_ivp1}, with 
\beq\label{chicago1102} \overline F=\underline F=F \ \ \text{ in} \ \ \mathcal{S}^d\times (\R^d\setminus \{0\}), \eeq
and 
\beq\label{chicago1103} \overline F(X,0)= F^\star(X,0) \ \ \text{ and} \ \ \underline F(X,0)= F_\star(X,0) \ \ \text{ for}  \ \ X\in \mathcal{S}^d,\eeq
admits a comparison principle, that is, \eqref{lh1} is satisfied. 
\vs
We may now apply Theorem~\ref{thm1} to state our second main result. 
\begin{theorem}\label{thm3}
Assume  \eqref{v.a} and \eqref{vsmooth.aa}.  Then, for every $u_0 \in \textup{BUC}(\R^d)$, the scheme defined by \eqref{ex2.2} converges, as $\ep \to 0$ and locally uniformly in $\R^d \times [0,T]$, to the unique solution of \eqref{general_ivp1} with $F$ given by  \eqref{chicago1102} and \eqref{chicago1103}.
\end{theorem}
We continue with the proof of the previous lemma.
\vs
\begin{proof}[The proof of Lemma~\ref{takis11111}] 
Throughout the arguments below, $\ep$ is small enough and $y$ is close enough to $x$.  Moreover, all the limits are taken 
with $\ep\to 0$ and $y\to x$. Both  facts will not be repeated from step to step.
\vs

In addition, to ease the notation, for $i=1,\ldots,d$, we  set 
\beq\label{hp10}
a_i(y) = \phi_{x_i}(y)  \ \ \text{and} \ \ b_i(y) = \dfrac{1}{2}\phi_{x_ix_i}(y).
\eeq

It follows that 
\begin{align}\label{phiab}
\phi (y \pm \sqrt{\ep} e_i) - \phi(y)  = \pm \sqrt{\ep}a_i(y) + \ep b_i(y) + \text{o}(\ep)=  \pm \sqrt{\ep}a_i(y) + \ep b^\ep_i(y) 
\end{align}
(where $\ep b^\ep_i(y) = \ep b_i(y)+\text{o}(\ep)$) and  
\begin{align}\label{laplacian}
\phi(y+\sqrt{\ep} e_i) + \phi(y-\sqrt{\ep}e_i) - 2\phi(y) = 2\ep b_i(y) + \text{O}(\ep^{3/2}).
\end{align}
Moreover,  
\[
a_i(y)\to a_i(x) \ \ \text{and} \ \  b^\ep_i(y) \to b_i(x).
\]

\vs
For $z\in \R$, we set
\[
f^{\ep,y}(z) = \sum_{i=1}^dV(\phi (y + \sqrt{\ep} e_i) - \phi(y) - z) +  \sum_{i=1}^dV(\phi (y - \sqrt{\ep} e_i) - \phi(y) - z).
\]
Then $\Phi^{\ep}(y)$ is a minimizer of the convex function $f^{\ep,y}$, and, hence,
\beq\label{chicago11112}
(f^{\ep,y})'(\Phi^{\ep}(y))=0.
\eeq
\vs
Since $V'$ is an odd function, we find
\beq
\begin{split}\label{fprime1}
(f^{\ep,y})'(z) &=  -\sum_{i=1}^dV'(\phi (y + \sqrt{\ep} e_i) - \phi(y) - z) -  \sum_{i=1}^dV'(\phi (y - \sqrt{\ep} e_i) - \phi(y) - z)\\[1.5mm]
&=  \sum_{i=1}^d[V'(z - \phi (y + \sqrt{\ep} e_i) + \phi(y)) - V'(\phi (y - \sqrt{\ep} e_i) - \phi(y) - z)]. 
\end{split}
\eeq
Suppose that 
\[
\begin{split}
z >& \max_i \frac{1}{2} (\phi(y+\sqrt{\ep} e_i) + \phi(y-\sqrt{\ep}e_i) - 2\phi(y))\\[1.5mm]
&(\text{resp.} \ \  z < \min_i \frac{1}{2} (\phi(y+\sqrt{\ep} e_i) + \phi(y-\sqrt{\ep}e_i) - 2\phi(y)),
\end{split}
\]
Then, for each $i$,
\[
\begin{split}
&z -  \phi (y + \sqrt{\ep} e_i) + \phi(y) > \phi (y - \sqrt{\ep} e_i) - \phi(y) - z\\[1.5mm]
&\qquad (\text{resp.} \ \ z - \phi (y + \sqrt{\ep} e_i) + \phi(y) < \phi (y - \sqrt{\ep} e_i) - \phi(y) - z ),
\end{split}
\]
and, hence, in view of  \eqref{fprime1} and the strictly increasing nature of $V'$ in a neighborhood of $0$ (which follows, e.g., from \eqref{hp5}), we get,  for $\ep$ small enough,  that
 $$(f^{\ep,y})'(z) >0 \ \ (\text{resp.} \ \ (f^{\ep,y})'(z) <0). $$ 
The convexity of $f^{\ep,y}$ and the observations above  imply
\beq\label{athens2.1}
\begin{split}
\min_i \frac{1}{2}(\phi(y+\sqrt{\ep} e_i) + \phi(y-\sqrt{\ep}e_i) - 2\phi(y)) \le \Phi^\ep(y)\\[1.5mm]
\le \max_i \frac{1}{2} (\phi(y+\sqrt{\ep} e_i) + \phi(y-\sqrt{\ep}e_i) - 2\phi(y)).
\end{split}
\eeq
The inequalities in \eqref{athens2.1}, together with \eqref{laplacian}, immediately imply the first claim of the lemma and the a priori estimate 
\begin{align}\label{apriori1}
\Phi^\ep(y)= \text{O}(\ep). 
\end{align}
%To ease the notation, for $i=1,\ldots,d$, we  set 
%\beq\label{hp10}
%a_i(y) = \phi_{x_i}(y)  \ \ \text{and} \ \ b_i(y) = \dfrac{1}{2}\phi_{x_ix_i}(y).
%\eeq
%
%It follows that 
%\begin{align}\label{phiab}
%\phi (y \pm \sqrt{\ep} e_i) - \phi(y)  = \pm \sqrt{\ep}a_i(y) + \ep b_i(y) + \text{o}(\ep)=  \pm \sqrt{\ep}a_i(y) + \ep b^\ep_i(y) 
%\end{align}
%(where $\ep b^\ep_i(y) = \ep b_i(y)+\text{o}(\ep)$) and  
%{\bf [Where are we using the following display? I couldn't spot anything.] \tcb{ it s used to go from \eqref{athens2.1} to \eqref{apriori1}. It is, however, better to move it at the beginning of the proof which  did and commented it out below.}}
%  }
%\[
%\phi(y+\sqrt{\ep} e_i) + \phi(y-\sqrt{\ep}e_i) - 2\phi(y) = 2\ep b_i(y) + \text{O}(\ep^{3/2}).
%\]
%Moreover,  
%\[
%a_i(y)\to a_i(x) \ \ \text{and} \ \  b^\ep_i(y) \to b_i(x).
%\]
To prove the second claim, we assume that $D\phi(x)\ne 0$, which yields  
that at least one of the $a_i(x)$'s is nonzero.
\vs

Using \eqref{chicago11112}, \eqref{fprime1}, and \eqref{phiab}, we find
\beq\label{hp1}
 \sum_{i=1}^d[V^{(1)}( \sqrt{\ep}a_i(y) +\Phi^\ep(y) -b_i^\ep(y)) - V^{(1)}( \sqrt{\ep}a_i(y) -(\Phi^\ep(y) -b_i^\ep(y)))]=0. 
\eeq
The $C^2$-regularity of $V$ yields,  for each $i=1,\ldots,d$, 
\beq\label{hp2}
\begin{split}
&V^{(1)}( \sqrt{\ep}a_i(y) +\Phi^\ep(y) -\ep b_i^\ep(y)) - V^{(1)}( \sqrt{\ep}a_i(y) -(\Phi^\ep(y) -\ep b_i^\ep(y))) \\&=2\int_0^1 V^{(2)}
(\sqrt{\ep}a_i(y) -(\Phi^\ep(y) -\ep b_i^\ep(y)) +2 \lambda (\Phi^\ep(y) -\ep b_i^\ep(y))) \; d\lambda\;  (\Phi^\ep(y) -\ep b_i^\ep(y)).
\end{split}
\eeq
Combining  \eqref{hp1} and \eqref{hp2} we get  
\[
\sum_{i=1}^d [\int_0^1 [V^{(2)}
(\sqrt{\ep}a_i(y) -(\Phi^\ep(y) -\ep b_i^\ep(y)) +2 \lambda (\Phi^\ep(y) -\ep b_i^\ep(y))) \; d\lambda\;  (\Phi^\ep(y) -\ep b_i^\ep(y))]=0,
\]
and, as long as 
\beq\label{hp8}
\sum_{i=1}^d  [\int_0^1 V^{(2)}
(\sqrt{\ep}a_i(y) -(\Phi^\ep(y) -\ep b_i^\ep(y)) +2 \lambda (\Phi^\ep(y) -\ep b_i^\ep(y))) \; d\lambda] \neq 0,
\eeq
we have
\[
\dfrac{\Phi^\ep(y)}{\ep}=\dfrac{\sum_{i=1}^d \int_0^1 [V^{(2)}
(\sqrt{\ep}a_i(y) -(\Phi^\ep(y) -\ep b_i^\ep(y)) +2 \lambda (\Phi^\ep(y) -\ep b_i^\ep(y))) \; d\lambda]\; b^\ep_i(y) }{\sum_{i=1}^d [\int_0^1 [V^{(2)}
(\sqrt{\ep}a_i(y) -(\Phi^\ep(y) -\ep b_i^\ep(y)) +2 \lambda (\Phi^\ep(y) -\ep b_i^\ep(y))) \; d\lambda] }.
\]

Next, we observe that \eqref{hp5} yields that, for each $i=1,\ldots,d$ 
and uniformly in $\lambda \in [0,1]$, 
\beq\label{hp32}
\begin{split}
(\alpha -\theta) |\sqrt{\ep}a_i(y) -(\Phi^\ep(y) -\ep b_i^\ep(y)) +2 \lambda (\Phi^\ep(y) -\ep b_i^\ep(y))|^\sigma \\[1.2mm]
\leq  V^{(2)}
(\sqrt{\ep}a_i(y) -(\Phi^\ep(y) -\ep b_i^\ep(y)) +2 \lambda (\Phi^\ep(y) -\ep b_i^\ep(y)))\\[1.2mm]
\leq  
(\alpha +\theta) |\sqrt{\ep}a_i(y) -(\Phi^\ep(y) -\ep b_i^\ep(y)) +2 \lambda (\Phi^\ep(y) -\ep b_i^\ep(y))|^\sigma.
\end{split}
\eeq 
Since  $|D\phi (x)| \neq 0$, we have
\[
\sum_{i=1}^d |a_i(x)|^\sigma >0,
\]
Finally, recall that by \eqref{apriori1}, $\Phi^\ep(y) = \text{O}(\ep)$. Hence, \eqref{hp8} holds. 
\vs
It then follows from \eqref{hp32} that, as $\ep\to 0$ and $y\to x$,  
\[
\dfrac{\Phi^\ep(y)}{\ep} \to \dfrac{\sum_{i=1}^d |a_i(x)|^\sigma b_i(x)}{\sum_{i=1}^d  |a_i(x)|^\sigma},
\]
and, hence, the claim. 
\end{proof}

\subsubsection{\bf {Strict minimum but not smooth}} 
In this subsection we consider the potential 
\beq\label{ex1delta}
V(y) = |y|^{1+\delta} \ \ \text{with} \ \  \delta \in (0,1).
\eeq
Let $\phi$ and $\Phi^\ep$ be as in the previous subsection, but with as above. Note that the existence of  $\Phi^\ep$ follows from the strict convexity of $V$.
\vs 

For each nonempty $E\subseteq\{1,\ldots, d\}$, set 
\[
V_E = \{p\in \R^d: p_i = 0 \text{ for all } i\in E \text{ and } p_i\ne 0 \text{ for all } i\notin E\},
\]
and set 
\[
V_\emptyset= \{p\in \R^d: p_i\ne 0 \text{ for all } 1\le i\le d\}. %\R^d\setminus\{0\}.
\]
\begin{lemma}
Fix $x\in \R^d$. If $D\phi(x)\in V_E$ for some nonempty $E\subseteq\{1,\ldots, d\}$, then 
 \begin{align}\label{deltasup.a}
\frac{1}{2}\min_{i\in E} \phi_{x_ix_i}(x)\le \liminf_{y\to x, \ep \to 0} \df{\Phi^{\ep}(y)}{\ep}  \le \limsup_{y\to x, \ep \to 0}  \df{\Phi^{\ep}(y)}{\ep} \le \frac{1}{2}\max_{i\in E} \phi_{x_ix_i}(x),
 \end{align}
and, if $D\phi(x)\in V_\emptyset$, then
 \[
 \lim_{y\to x, \ep \to 0}  \df{\Phi^{\ep}(y)}{\ep}   =  \frac{\sum_{i=1}^d|\phi_{x_i}(x)|^{\delta-1}\phi_{x_ix_i}(x)}{2\sum_{i=1}^d|\phi_{x_i}(x)|^{\delta -1}}. 
\]
\end{lemma}
\begin{proof}

Proceeding exactly as in the proof of Lemma \ref{takis11111}, we end up,  for $\ep$ small and $y$ near $x$, with the pair of inequalities displayed in \eqref{athens2.1} and the a priori estimate \eqref{apriori1}.
\vs 
Next fix $x$ with  $D\phi(x)\in V_E$ for some nonempty $E$. Then, for $y$ sufficiently close to $x$, there is some $c>0$ such that $|\phi_{x_i}(y)| > c$ when $i\notin E$. Moreover,   for any $\eta>0$,  $|\phi_{x_i}(y)| < \eta$ for all $i\in E$ provided again that $y$ sufficiently close to $x$.
\vs

Henceforth in this proof, $C_1,C_2,\ldots$ will denote positive constants that have no dependence on the choice of $\eta$. Moreover, the notation $\text{O}$  will be used only when the implicit constants have no dependence on the choice of $\eta$. 
\vs
For each $i=1,\ldots,d$, let  $a_i$ and $b_i$ be defined as in \eqref{hp10}, set 
\[
\begin{split}
\alpha_i^\ep(y) &= \Phi^\ep(y) - \phi(y+\sqrt{\ep}e_i) + \phi(y), \\
\beta_i^\ep(y) &= \phi(y - \sqrt{\ep}e_i) - \phi(y) - \Phi^\ep(y), 
\end{split}
\]
and 
\[ Q_i^\ep(y) = V'(\alpha_i^\ep(y)) - V'(\beta_i^\ep(y)),\]
and note that, in view of \eqref{chicago11112}, 
\beq\label{qieq}
\sum_{i=1}^d Q_i^\ep(y) = 0,
\eeq
and, by \eqref{apriori1},
\[
\alpha_i^\ep(y)=-\sqrt{\ep} \phi_{x_i}(y) + \text{O}(\ep) \ \ \text{and} \ \ \beta_i^\ep(y)=-\sqrt{\ep} \phi_{x_i}(y) + \text{O}(\e).
\]
It follows that, when $i\notin E$ and $y$ is close to $x$ and $\ep$ is close enough to $0$,  $\alpha_i^\ep(y)$ and $\beta_i^\ep(y)$ are both of order $\sqrt{\ep}$ on the same side of the origin.

\vs

Since $V''(z)$ is of order $|z|^{\delta-1}$ when $z$ is close to the origin, the observations above yield that, when $i\notin E$, as as $y\to x$ and $\ep \to 0$,
\begin{align}\label{qic}
|Q_i^\ep(y)| &= \text{O}(\ep^{(\delta-1)/2}) |\alpha_i^\ep(y) - \beta_i^\ep(y)|\le \text{O}(\ep^{(\delta+1)/2}), 
\end{align}
where in the second step, we used \eqref{apriori1} and Taylor's expansion to deduce that
\begin{align}\label{alphabeta}
\alpha_i^\ep(y) - \beta_i^\ep(y) = 2\Phi^\ep(y) - (\phi(y+\sqrt{\ep}e_i) + \phi(y-\sqrt{\ep}e_i)-2\phi(y)) = \text{O}(\ep).
\end{align}
Now, suppose that for all $i\in E$ and along a sequence $y\to x$ and $\ep \to 0$
\begin{align}\label{imposs}
\Phi^\ep(y) >\ep b_i(y)+ \ep \eta^{(1-\delta)/2}.
\end{align}
Then, along  this sequence, for any $i\in E$,
\begin{equation}\label{alphabeta2}
\begin{split}
\alpha_i^\ep(y) - \beta_i^\ep(y) &= 2\Phi^\ep(y) - (\phi(y+\sqrt{\ep}e_i) + \phi(y-\sqrt{\ep}e_i)-2\phi(y)) \\
&= 2\Phi^\ep(y) - 2\ep b_i(y) + \text{O}(\ep^{3/2}) \\
&> 2\ep \eta^{(1-\delta)/2} + \text{O}(\ep^{3/2}),
\end{split}
\end{equation}
and 
\[
|\alpha_i^\ep(y)| = |-\sqrt{\ep}\phi_{x_i}(y) + \text{O}(\ep)| \le C_1 \sqrt{\ep} \eta + \text{O}(\ep), 
\]
and
\[|\beta_i^\ep(y)|= |-\sqrt{\ep}\phi_{x_i}(y) + \text{O}(\ep)| \le C_1 \sqrt{\ep} \eta + \text{O}(\ep).\]

Then the  properties  of $V'$  imply that 
\beq\label{hp12}
\begin{split}
Q_i^\ep(y) = V'(\alpha_i^\ep(y)) - V'(\beta_i^\ep(y)) &> C_2 (\sqrt{\ep}\eta)^{\delta -1} (\alpha_i^\ep(y) - \beta_i^\ep(y)) \\
&\ge C_3(\sqrt{\ep}\eta)^{\delta -1}(\ep \eta^{(1-\delta)/2} + \text{O}(\ep^{3/2}))\\
&= C_3\ep^{(\delta+1)/2} \eta^{-(1-\delta)/2} + \text{O}(\ep^{(\delta+2)/2})\eta^{\delta-1}. 
\end{split}
\eeq
Combining  \eqref{qic} and \eqref{hp12}, we see that, for all $i\in E$ and any sequence $y\to x$ and $\ep \to 0$ satisfying \eqref{imposs}, we have
\begin{align*}
\sum_{i=1}^d Q_i^\ep(y) &= \sum_{i\in E} Q_i^\ep(y) + \sum_{i\notin E} Q_i^\ep(y)\\
&\ge C_3\ep^{(\delta+1)/2} \eta^{-(1-\delta)/2} + \text{O}(\ep^{(\delta+1)/2})(1+\eta^{\delta-1});
\end{align*}
Recall that the implicit constant in the $\text{O}$ notation in the last line has no dependence on the choice of $\eta$. Thus, if $\eta$ is chosen small enough, the estimate above contradicts \eqref{qieq} for sufficiently small $\ep$. 
\vs

Hence, for a small enough  $\eta$, there is no sequence satisfying \eqref{imposs}, and, thus, 
\[
\limsup_{y\to x, \ep\to 0} \frac{\Phi^\ep(y)}{\ep} \le \max_{i\in E} b_i(x) + \eta^{(1-\delta)/2}. 
\]
Since $\eta$ is arbitrary, the last estimate  proves the leftmost inequality in \eqref{deltasup.a}. The rightmost inequality may be proved by a similar argument, by just assuming the reverse inequality in \eqref{imposs} and arriving at a contradiction.
\vs
Next, take any $x$ such that $D\phi(x)\in V_\emptyset$. In the sequel all statements are supposed to hold as $y\to x$ and $\ep \to 0$, a fact which will not be repeated from line to line. 
\vs

Then arguing as above, it is easy to see that for any $i$, $\alpha_i^\ep(y)$ and $\beta_i^\ep(y)$ are of order $\sqrt{\ep}$ and, eventually, on the same side of the origin. 
\vs 

Therefore, by \eqref{qieq} and the smoothness of $V$ in $\R\setminus \{0\}$, we find 
\begin{align*}
0 &= \sum_{i=1}^d (\alpha_i^\ep(y) - \beta_i^\ep(y)) V''(\beta_i^\ep(y)) +\text{O}(\ep^2\ep^{(\delta-2)/2}),
\end{align*}
where the remainder term was obtained using the fact that $V'''(z) = \text{O}(|z|^{\delta-2})$ near zero, and \eqref{alphabeta}. 

\vs
Then, by \eqref{alphabeta2}, we get 
\begin{align*}
\Phi^\ep(y) \sum_{i=1}^d V''(\beta_i^\ep(y)) = \ep \sum_{i=1}^d b_i(y) V''(\beta_i^\ep(y)) + \text{O}(\ep^{3/2}) \sum_{i=1}^d V''(\beta_i^\ep(y)) + \text{O}(\ep^{(\delta+2)/2}),
\end{align*}
and consequently, since $V''$ is strictly positive everywhere,
\begin{align*}
\frac{\Phi^\ep(y)}{\ep} &= \frac{ \sum_{i=1}^d b_i(y) V''(\beta_i^\ep(y))}{\sum_{i=1}^d V''(\beta_i^\ep(y)) } +  \text{O}(\sqrt{\ep}) + \frac{\text{O}(\ep^{\delta/2})}{\sum_{i=1}^d V''(\beta_i^\ep(y))}.
\end{align*}
\vs
Since $\beta_i^\ep(y) = -\sqrt{\ep}\phi_{x_i}(y) + \text{O}(\ep)$ and $\phi_{x_i}(x)\ne0$ for all $i$, and $V''(z) = (1+\delta)\delta |z|^{\delta-1}$ for all $z\in \R$, we have that
\begin{align*}
\sum_{i=1}^d V''(\beta_i^\ep(y)) &= (1+\delta)\delta \sum_{i=1}^d |-\sqrt{\ep}\phi_{x_i}(y) + \text{O}(\ep)|^{\delta-1}\\
&= (1+\delta)\delta \ep^{(\delta-1)/2} \sum_{i=1}^d|\phi_{x_i}(y)|^{\delta-1} + \text{O}(\ep^{(\delta-1)/2}\ep),
\end{align*}
and, hence, 
\[
\frac{\text{O}(\ep^{\delta/2})}{\sum_{i=1}^d V''(\beta_i^\ep(y))} = \text{O}(\sqrt{\ep})
\]
Finally, note that 
\begin{align*}
\sum_{i=1}^d b_i(y) V''(\beta_i^\ep(y)) &= (1+\delta)\delta \sum_{i=1}^d (b_i(x)+\text{o}(1)) |-\sqrt{\ep}(\phi_{x_i}(x)+\text{o}(1)) + \text{O}(\ep)|^{\delta-1}\\
&= (1+\delta)\delta \ep^{(\delta-1)/2}\sum_{i=1}^d b_i(x)|\phi_{x_i}(x)|^{\delta-1} + \text{o}( \ep^{(\delta-1)/2}).
\end{align*}
The second claim of the lemma is now proved by combining the last four displays.
\end{proof}

Define  $F \in C(\mathcal S^d\times V_\emptyset)$ as  
\beq\label{Fdef}
F(X,p)= \frac{\sum_{i=1}^d |p_i|^{\delta-1} X_{ii}}{2\sum_{i=1}^d |p_i|^{\delta-1}}.
\eeq
The boundary of $V_\emptyset$ is the union of $V_E$ over all nonempty $E$. Take any nonempty $E$ and any $(X,p)\in V_E$. Take any sequence $(Y,q)$ in $\mathcal S^d \times V_\emptyset$ converging to $(X,p)$. Since $q_i\to 0$ for $i\in E$ and $q_i\to p_i\ne 0$ for $i\notin E$, the above formula for $F$ makes it clear that the $\limsup$ of $F(Y,q)$ as $(Y,q)\to (X,p)$ is at most $\frac{1}{2}\max_{i\in E} X_{ii}$. Moreover, we can choose a sequence $(Y,q)$ so that this value is achieved. In other words, for $(X,p)\in \mathcal S^d \times V_E$, 
\[
F^\star(X,p) = \frac{1}{2}\max_{i\in E} X_{ii},
\]
where $F^\star$ is defined as in \eqref{fstar1}. Similarly, 
\[
F_\star(X,p) =  \frac{1}{2}\min_{i\in E} X_{ii}. 
\]
For $u_0\in \text{BUC}(\R^d)$ and $F$ given by \eqref{Fdef} we consider the initial value problem 
\beq\label{takis1460a}
u_t=F(D^2u,Du) \ \  \text{in} \ \ \rt \  \ \text{and} \ \   u(\cdot,0)=u_0,
\eeq
which can be formulated as \eqref{general_ivp1} with 
\beq\label{hp20}
\overline F=\underline F=F \ \ \text{in} \ \ \mathcal S^d\times V_\emptyset \ \ \text{and} \ \ 
\overline F=F^\star \ \ \text{and} \ \ \underline F=F_\star \ \ \text{in} \ \ \mathcal S^d\times \partial V_\emptyset.
\eeq
It follows from It follows from Theorem~5 in \cite{MS} that the initial value problem  \eqref{general_ivp1} with $\overline F$ and $\underline F$ as in \eqref{hp20} satisfies \eqref{lh1}. 
\vs
For completeness, following \cite{MS}, we remark that the geometry associated with \eqref{takis1460a} and  $F$, $\overline F$ and $\underline F$  is rather  complicated. Indeed, the discontinuities  of $F$, $\overline F$ and $\underline F$ are characterized by a Finsler norm $\phi$ is defined  implicitly through its dual norm 
given, for each $p\in \R^d$,  by
\[\phi^\star (p) = \max \{ \underset{e' \in A\setminus \{e,-e\}} \sum |(p,e')| \; : e\in A \},\]
which defines a polyhedral Finsler norm. Above, $A$ is the set used at the beginning of Section~\ref{setup} to define the schemes.
\vs
We may now apply Theorem~\ref{thm1} to state our second main result.  
\begin{theorem}\label{thm100}
Assume  \eqref{ex1delta}.  Then, for every $u_0 \in \textup{BUC}(\R^d)$, the scheme defined by \eqref{ex2.2} converges, as $\ep \to 0$ and locally uniformly in $\R^d \times [0,T]$, to the unique solution of \eqref{general_ivp1} with $F, \overline F$ and $\underline F$  given by  \eqref{Fdef} and \eqref{hp20}.
\end{theorem}

\subsubsection{\bf {The solid-on-solid potential}} 
We consider here the  potential 
\beq\label{ex3.1.a}
V(y)= |y|  
\eeq
which is the potential for the solid-on-solid (SOS) model. This model has attracted considerable attention in the statistical physics literature. For a recent survey of rigorous results about the SOS model and various applications of the model, see the introduction of~\cite{H19}.
\vs
Next we introduce some more notation. Given $v_{\pm 1}, \ldots, v_{\pm d}$, we look at points minimizing the map
\[y\mapsto \sum_{i=1}^d |y-v_i| + |y-v_{-i}|,\] 
\vs
which, in view of the previous discussion about minima of  convex functions, form an interval $[a,b]$ with the understanding that it may be the case that $a=b$, with  $a=a(v_{\pm 1}, \ldots, v_{\pm d})$ and $b=b(v_{\pm 1}, \ldots, v_{\pm d})$. We define $\Phi(v_{\pm 1}, \ldots, v_{\pm d})$ as 
\beq\label{takis1420}
\Phi(v_{\pm 1}, \ldots, v_{\pm d})=\dfrac{ a(v_{\pm 1}, \ldots, v_{\pm d}) + b(v_{\pm 1}, \ldots, v_{\pm d})}{2}.
\eeq

\vs 
We recall that, given  real numbers $x_1,\ldots,x_n$,  its {median}  is any number $x$ that minimizes the map 
$$x\mapsto \sum_{i=1}^n |x_i-x|.$$

Thus, $\Phi(v_{\pm 1}, \ldots, v_{\pm d})$ is simply the midpoint of the set of medians of the numbers $v_{\pm 1}, \ldots, v_{\pm d}$. Lemma~\ref{takis1410} and Lemma~\ref{medianlemma.a}, which are stated and proved at the end of the ongoing subsection, summarize the properties of medians used in the proof of the convergence of the growth scheme generated by \eqref{ex3.1.a}. 
\vs

The scheme at scale one is defined by the rule
\[
u(x,t+1)=\varphi((u(x+a, t))_{a\in A})=u(x) + \Phi(u(x\pm e_1,t)-u(x,t), \dots,  u(x\pm e_d,t)-u(x,t)).
\]
It is immediate that $\varphi(0)=0$. Moreover, the scheme is clearly equivariant under translations by constants and monotone. 
\vs

At parabolic scale,  the scheme $S(\ep):\mathcal B(\R^d)\to \mathcal B(\R^d)$ is given by 
\[
S(\ep)v(x)=v(x) + \Phi(v(x\pm \sqrt \ep e_1)-v(x), \dots,  v(x\pm \sqrt \ep e_d)-v(x)).
\]

 To conclude we need to study, for $\phi:\R^d\to \R$ smooth,  the limit, as $\ep \to 0$ and $y\to x$ of 
 \[\df{S(\ep)\phi(y)-\phi(y)}{\ep}=\df {\Phi(\phi(y\pm \sqrt \ep e_1)-\phi(y), \dots,  \phi(y\pm \sqrt \ep e_d)-\phi(y))}{\ep}=\df{\Phi^{\ep}(y)}{\ep}.
\]

The following lemma gives the asymptotic behavior of $\Phi^{\ep,y}$ as $\ep \to 0$ and $y \to x$. For this it is convenient to introduce a partition of $\R^d$. For each nonempty $E\subseteq \{1,\ldots,d\}$, let 
\[
V_E = \{p\in\R^d: |p_i| = |p_j| \text{ for all } i,j\in E \text{ and } |p_i| < |p_j| \text{ for all } i\in E, j\notin E\}.
\]
When $E$ is a singleton set like $\{i\}$, we will write $V_i$ instead of $V_{\{i\}}$.

\vs
\begin{lemma}\label{medianlemma2.a}
Fix $x$ and let $\phi:\R^d\to \R$ be smooth. Let $\Phi^\ep$ be defined as in equation \eqref{Phidef}. If $D\phi(x)\in V_E$, then 
 \begin{align}\label{mediansup.a}
\frac{1}{2}\min_{i\in E} \phi_{x_ix_i}(x)\le \liminf_{y\to x, \ep \to 0} \df{\Phi^{\ep}(y)}{\ep}  \le \limsup_{y\to x, \ep \to 0}  \df{\Phi^{\ep}(y)}{\ep} \le \frac{1}{2}\max_{i\in E} \phi_{x_ix_i}(x).
 \end{align}
Moreover, if  $D\phi(x)\in V_i$ for some $i$, then
 \beq\label{medianconsistency.a}
 \lim_{y\to x, \ep \to 0}  \df{\Phi^{\ep}(y)}{\ep}   =  \frac{1}{2}\phi_{x_ix_i}(x). 
\eeq
\end{lemma}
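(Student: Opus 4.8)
The plan is to obtain the asymptotics of $\Phi^{\ep}(y)$ by a two–stage reduction. First I would show that, for $\ep$ small and $y$ near $x$, the midpoint–of–medians $\Phi^{\ep}(y)$ only ``sees'' the $2|A|$ numbers produced by the innermost coordinate directions, namely the indices $i\in A$, i.e.\ those minimizing $|\psi_{x_i}(x)|$. Then I would exploit the elementary structural properties of the median–midpoint map $\Phi$ — monotonicity, equivariance under common shifts, and evenness — recorded in Lemma~\ref{takis1410} and Lemma~\ref{medianlemma.a}, to sandwich this reduced quantity.

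First I would Taylor expand: for $\psi$ smooth and $y$ near $x$,
\[
\delta_i^{\pm}(y,\ep):=\psi(y\pm\sqrt\ep e_i)-\psi(y)=\big(\ep\,b_i(y)+\text{O}(\ep^{3/2})\big)\ \pm\ \big(\sqrt\ep\,|\psi_{x_i}(y)|+\text{O}(\ep^{3/2})\big),\qquad b_i(y):=\tfrac12\psi_{x_ix_i}(y),
\]
so the two members of the $i$-th pair are symmetric about the common centre $\ep b_i(y)+\text{O}(\ep^{3/2})$ (the cubic terms cancelling in the half–sum). By the characterization of medians, $\Phi^{\ep}(y)$ is the midpoint of the $d$-th and $(d+1)$-th order statistics of the $2d$ numbers $\{\delta_i^{\pm}(y,\ep)\}$. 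Since $D\psi(x)\in V_A$ gives $|\psi_{x_i}(x)|<|\psi_{x_j}(x)|$ for $i\in A$, $j\notin A$, after fixing $c>0$ small (with $\max_{i\in A}|\psi_{x_i}(x)|+2c<\min_{j\notin A}|\psi_{x_j}(x)|-2c$) and restricting $y$ to a fixed small ball around $x$, one checks that for $\ep$ small each non-$A$ index $j$ contributes one of $\delta_j^{\pm}$ strictly above all of $\{\delta_i^{\pm}:i\in A\}$ and the other strictly below all of them: for $j\notin A$ the half–spread $\sqrt\ep\,|\psi_{x_j}(y)|$ is bounded below by a fixed multiple of $\sqrt\ep$, hence dominates the centre, which is $\text{O}(\ep)$, and exceeds the half–spread of every $A$–pair. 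Consequently, in the sorted list the $d-|A|$ outer small members occupy the bottom positions, the $d-|A|$ outer large members the top, and positions $d$ and $d+1$ are precisely the $|A|$-th and $(|A|+1)$-th order statistics of $\{\delta_i^{\pm}(y,\ep):i\in A\}$; that is,
\[
\Phi^{\ep}(y)=\text{midpoint of the two central order statistics of }(\delta_i^{\pm}(y,\ep))_{i\in A}\qquad\text{for $\ep$ small and $y$ near $x$.}
\]

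To finish I would sandwich. With $\underline b:=\min_{i\in A}b_i(y)$ and $\overline b:=\max_{i\in A}b_i(y)$, the estimate $\delta_i^{\pm}(y,\ep)\le \ep\overline b\pm\sqrt\ep\,|\psi_{x_i}(y)|+\text{O}(\ep^{3/2})$ and the monotonicity (equivalently, the $1$-Lipschitz property in the sup norm) of the median–midpoint map give that $\Phi^{\ep}(y)$ is at most the median–midpoint of $(\ep\overline b\pm\sqrt\ep\,|\psi_{x_i}(y)|)_{i\in A}$ up to $\text{O}(\ep^{3/2})$; that family is symmetric about $\ep\overline b$, so shift–equivariance and the vanishing of the median–midpoint on families symmetric about the origin give $\Phi^{\ep}(y)\le \ep\overline b+\text{O}(\ep^{3/2})$, and symmetrically $\Phi^{\ep}(y)\ge \ep\underline b+\text{O}(\ep^{3/2})$. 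Dividing by $\ep$, using continuity of $\psi_{x_ix_i}$ as $y\to x$, and then letting $\ep\to0$, yields \eqref{mediansup.a}. When $A=\{i\}$ the two bounds coincide — equivalently, $\Phi^{\ep}(y)$ is then just the average of the single pair $\delta_i^{\pm}(y,\ep)$, hence equals $\ep b_i(y)+\text{O}(\ep^{3/2})$ — which gives \eqref{medianconsistency.a}.

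I expect the reduction step to be the main obstacle, not because it is deep but because one must handle carefully the competition between the $\sqrt\ep$ scale of the first–order Taylor terms and the $\ep$ scale of the second–order terms. In particular, when $A$ collects indices at which $D\psi(x)$ has a vanishing component, the innermost numbers $\delta_i^{\pm}$ ($i\in A$) are comparable to $\sqrt\ep$ only with a constant that degenerates as $y\to x$, so that the $\ep b_i$ contribution may become the dominant term for such $i$; one has to verify that neither these $\ep b_i$ terms nor the $\text{O}(\ep^{3/2})$ remainders can move a non-$A$ number into one of the two central slots, which is why the separation is proved uniformly over a \emph{fixed} ball and over small $\ep$. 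Everything else — the limit passage and the monotonicity, shift–equivariance, and evenness of the median–midpoint — is routine bookkeeping.
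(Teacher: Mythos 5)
Your proposal is correct, and its overall architecture matches the paper's: Taylor expand to write each pair as $a_i\pm b_i$ with $a_i=\tfrac12(\psi(y+\sqrt\ep e_i)+\psi(y-\sqrt\ep e_i)-2\psi(y))=\tfrac\ep2\psi_{x_ix_i}(y)+\text{o}(\ep)$, discard the $j\notin A$ pairs because their much larger spreads $\sqrt\ep|\psi_{x_j}(y)|$ push one member above and one below all the $A$-values so that the median interval is unchanged, sandwich the midpoint of medians of the surviving $A$-pairs between $\min_{i\in A}a_i$ and $\max_{i\in A}a_i$, and for $|A|=1$ read off the exact limit as the average of the single pair. Where you diverge from the paper is in how the sandwich bound is obtained: the paper quotes Lemma~\ref{medianlemma.a}, which it proves by a direct order-statistics case analysis, whereas you re-derive the same inequality by combining three structural facts about the midpoint-of-medians operator --- coordinatewise monotonicity, equivariance under common shifts, and that it vanishes on families symmetric about the origin --- by pushing each $a_i$ up to $\max_i a_i$ and observing the result is a shifted symmetric family. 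This is a genuinely alternative (and arguably cleaner) proof of the relevant case of Lemma~\ref{medianlemma.a}; it trades the combinatorics for a short abstract argument, at the mild cost of having to verify that the operator really is monotone in each argument (it is, precisely because the median set only grows when the data grows). You also spell out carefully, via the count of which order statistics end up in the two central slots, the reduction to the $A$-indices that the paper dispatches with ``it is easy to see''; there is no gap there, and your extra care around the degenerate case where some $\psi_{x_i}(x)$ vanish (so the $i\in A$ spreads may shrink relative to $\ep b_i$) is exactly the point that makes the uniform-over-a-fixed-ball phrasing necessary.
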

\begin{proof}
Assume that $D\phi(x)\in V_E$ and suppose that $(y,\ep)$ is sufficiently close to $(x,0)$.
\vs

Since 
\begin{align}\label{medianphi.a}
\phi(y\pm \sqrt{\ep}e_i) - \phi(y) = \pm \sqrt{\ep} \phi_{x_i}(y) + \tO(\ep),
\end{align}
it is easy to see that the set of medians of $\phi(y\pm \sqrt{\ep}e_1) - \phi(y),\ldots, \phi(y\pm \sqrt{\ep}e_d) - \phi(y)$ 
is the same as the set of medians of $\left(\phi(y\pm \sqrt{\ep}e_i) - \phi(y)\right)_{i\in E}$.
\vs

Let
\[
a_i = \frac{1}{2}(\phi(y+ \sqrt{\ep}e_i)+ \phi(y- \sqrt{\ep}e_i) - 2\phi(y)) \ \text{and} \
b_i = \frac{1}{2}(\phi(y+ \sqrt{\ep}e_i)  - \phi(y- \sqrt{\ep}e_i)),
\]
so that 
\[
\phi(y\pm \sqrt{\ep}e_i) - \phi(y)  = a_i \pm b_i. 
\]
It follows from  Lemma \ref{medianlemma.a} (below) that 
\begin{align*}
\Phi^{\ep}(y)\le \max_{i\in E} a_i = \max_{i\in E} \biggl(\frac{\ep}{2} \phi_{x_ix_i}(y) + \too(\ep)\biggr)
= \frac{\ep}{2} \max_{i\in E} \phi_{x_ix_i}(y) + \too(\ep),
\end{align*}
which proves the upper bound in \eqref{mediansup.a}. The lower bound follows similarly. 
\vs
If  $E = \{i\}$ for some $i$, it is again easy to see from  \eqref{medianphi.a} that,  if  $y$ is sufficiently close to $x$ and $\ep$ is sufficiently close to $0$, 
\begin{align*}
\Phi^{\ep}(y) = \frac{1}{2}[(\phi(y+ \sqrt{\ep}e_i) - \phi(y)) + (\phi(y- \sqrt{\ep}e_i) - \phi(y))]
= \frac{\ep}{2}\phi_{x_ix_i}(y)  + \too(\ep), 
\end{align*}
which  proves the second claim. 
\end{proof}
\vs

Define  $F \in C(\mathcal S^d\times(\displaystyle{\bigcup}_{i=1}^d V_i))$ as  
\beq\label{medianF.a}
F(X,p)= \frac{ X_{ii}}{2} \ \ \text{if } \ \ p\in V_i.
\eeq

Then \eqref{medianconsistency.a} yields the consistency property if $D\phi (x) \in  \displaystyle{\bigcup}_{i=1}^d V_i$. 
\smallskip

To complete the argument, we need to analyze what happens on $\partial {\bigcup}_{i=1}^d V_i$. Note that $\partial {\bigcup}_{i=1}^d V_i$ can be written as a disjoint union of $V_E$'s over all $E$ of size at least $2$.
\vs 
Fix $(X,p)\in \mathcal{S}^d\times V_E$ and suppose that $(Y,q)\in \mathcal{S}^d\times (\bigcup_{i=1}^d V_i)$ converges to $(X,p)$. Passing to a subsequence, we may assume that there exists some $i$ such that $q\in V_i$. Thus, $F(Y,q)=\frac{1}{2}Y_{ii}$, and, hence, $F(Y,q)$ converges to $\frac{1}{2}X_{ii}$. This implies, in particular, that any such $i$ must be a member of $E$. 
\vs
Thus, 
$$F^\star(X,p)\le \frac{1}{2} \max_{i\in E} X_{ii} \ \text{ and } \  F_\star(X,p)\ge \frac{1}{2} \min_{i\in E} X_{ii}.$$
\vs 

If $i^*\in E$ is a coordinate at which $X_{ii}$ is maximized, then choosing $(Y,q)\to (X,p)$ so that $q\in V_{i^*}$ always, we get $F^\star(X,p)\ge \frac{1}{2}\max_{i\in E} X_{ii}$, and,  similarly, $F_\star(X,p)\le \frac{1}{2}\min_{i\in E} X_{ii}$. 
\vs

Thus, we conclude that, when $p\in V_E$,
\[
F^\star(X,p)= \frac{1}{2} \max_{i\in E} X_{ii} \ \text{and} \  F_\star(X,p) = \frac{1}{2}\min_{i\in E}X_{ii}.
\]
It is now easy to verify \eqref{consistency} using Lemma \ref{medianlemma2.a} and taking $M = S^{d-1} \cap (\bigcup_{|E|\ge 2} V_E)$. 
\vs
For $u_0\in \text{BUC}(\R^d)$ and $F$ given by \eqref{medianF.a} we consider the initial value problem 
\[
u_t=F(D^2u,Du) \ \  \text{in} \ \ \rt \  \ \text{and} \ \   u(\cdot,0)=u_0,
\]
which can be formulated as  \eqref{general_ivp1} with 
\beq\label{hp21}
\overline F=\underline F=F \ \ \text{in} \ \ \mathcal S^d\times V_\emptyset \ \ \text{and} \ \ 
\overline F=F^\star \ \ \text{and} \ \ \underline F=F_\star \ \ \text{in} \ \ \mathcal S^d\times \partial V_\emptyset.
\eeq
It follows from It follows from Theorem~5 in \cite{MS} that the initial value problem  \eqref{general_ivp1} with $\overline F$ and $\underline F$ as in \eqref{hp21} satisfies \eqref{lh1}. 
\vs
As explained in \cite{MS}, the comparison principle for  the initial value problem with nonlinearity \eqref{medianF.a} is based on the fact that $F$  is  compatible with a particular  polyhedral Finsler norm whose dual 
is given, for $p\in \R^d$ and $A$ as in the beginning of Section~\ref{setup}, by  
\[\underline\phi^\star(p)=\max \{ \underset {e' \in A\setminus \{e,-e\}} \sum  |(p,  e')| \;: e\in A \}.\]
\vs
Combining all the above we have now the next result of the paper.

\begin{theorem}\label{thm5}
Assume \eqref{ex3.1.a}. Then the scheme defined using \eqref{takis1420} converges, as $\ep \to 0$ and locally uniformly, to the unique solution of \eqref{ivp1} with $F$ as in \eqref{medianF.a}.
\end{theorem}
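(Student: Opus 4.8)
The plan is to check that the family $(S(\ep))_{\ep\in[0,1]}$ given by \eqref{takis1440}, together with $F$ from \eqref{medianF.a} and the envelopes $F^\star,F_\star$ from \eqref{medianFstar.a}, satisfies the hypotheses of Theorem~\ref{thm1}, and then to invoke that theorem; most of the required verifications have in fact already been carried out in the discussion preceding the statement.

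First I would dispose of the structural conditions. Since $\Phi$ in \eqref{takis1420} is the midpoint of the set of medians of its arguments, it depends only on their pairwise differences and vanishes when all of them vanish, so $S(0)v=v$ and $S(\ep)(v+k)=S(\ep)v+k$, i.e.\ \eqref{id} and \eqref{translation} hold; and since the midpoint-of-medians map is nondecreasing in each coordinate, $S(\ep)$ is monotone, i.e.\ \eqref{monotonicity} holds. These are precisely the remarks recorded after \eqref{takis1250}.

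Next I would verify the consistency \eqref{consistency} with $M=\mathbb S^{d-1}\cap\big(\bigcup_{|A|\ge 2}V_A\big)$. For $\phi\in C^2(\R^d)$ and $x$ with $D\phi(x)\in\R^d\setminus\ol\R_+ M$ one has $D\phi(x)\in V_i$ for some $i$, and \eqref{medianconsistency.a} of Lemma~\ref{medianlemma2.a} gives
\[
\lim_{y\to x,\ \ep\to 0}\frac{S(\ep)\phi(y)-\phi(y)}{\ep}=\tfrac12\,\phi_{x_ix_i}(x)=F(D^2\phi(x),D\phi(x)),
\]
the continuous part of \eqref{consistency}. For $D\phi(x)\in\ol\R_+ M$ one has $D\phi(x)\in V_A$ with $|A|\ge 2$ (the case $D\phi(x)=0$ corresponding to $A=\{1,\dots,d\}$), and then the two-sided bound \eqref{mediansup.a} of Lemma~\ref{medianlemma2.a},
\begin{multline*}
\tfrac12\min_{i\in A}\phi_{x_ix_i}(x)\le\liminf_{y\to x,\ \ep\to 0}\frac{S(\ep)\phi(y)-\phi(y)}{\ep}\\
\le\limsup_{y\to x,\ \ep\to 0}\frac{S(\ep)\phi(y)-\phi(y)}{\ep}\le\tfrac12\max_{i\in A}\phi_{x_ix_i}(x),
\end{multline*}
matches, by \eqref{medianFstar.a}, the bounds $F_\star(D^2\phi(x),D\phi(x))$ from below and $F^\star(D^2\phi(x),D\phi(x))$ from above required in \eqref{consistency}. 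The comparison principle \eqref{comp1} for \eqref{takis1460} would then be quoted from \cite[Theorem~1]{I} when $d=2$ (where $M$ is a smooth submanifold of $\mathbb S^{d-1}$) and from \cite{MS} when $d\ge 3$, which also gives well-posedness of \eqref{takis1460} in $\buc(\R^d\times[0,T])$. Since the parabolic rescaling of Section~\ref{setup} (see \eqref{takis1103.5} and \eqref{takis1105}) shows that the rescaled height $u^\ep$ coming from \eqref{takis1250} obeys \eqref{scheme} with this $S(\ep)$, Theorem~\ref{thm1} then yields, for each $u_0\in\buc(\R^d)$, local uniform convergence of $u^\ep$ to the unique solution of \eqref{ivp1} with $F$ as in \eqref{medianF.a}.

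The main obstacle lies outside this bookkeeping: the comparison principle \eqref{comp1} for the discontinuous Hamiltonian $F$ is genuinely hard, especially for $d\ge 3$ where the singular set $M$ is not a manifold, and is imported from \cite{I} and \cite{MS}. Within the argument itself, the one delicate point is the consistency check when $D\phi(x)\in V_A$ with $|A|\ge 2$: one must be sure that the envelope estimates of Lemma~\ref{medianlemma2.a} match $F^\star$ and $F_\star$ exactly. This in turn rests on the properties of medians (Lemma~\ref{takis1410} and Lemma~\ref{medianlemma.a}) and on the fact that near such a gradient the increments $\phi(y\pm\sqrt\ep e_i)-\phi(y)$ with $i\notin A$ are of strictly smaller order than those with $i\in A$, so that only the coordinates in $A$ enter the median.
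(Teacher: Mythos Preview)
Your proposal is correct and follows essentially the same approach as the paper: the paper's ``proof'' of Theorem~\ref{thm5} is precisely the accumulation of the preceding verifications (structural properties of $S(\ep)$, consistency via Lemma~\ref{medianlemma2.a} with $M=\mathbb S^{d-1}\cap(\bigcup_{|A|\ge2}V_A)$, the envelope computation \eqref{medianFstar.a}, and well-posedness from \cite{I} and \cite{MS}) followed by an appeal to Theorem~\ref{thm1}, exactly as you outline. One small phrasing issue: your claim that $\Phi$ ``depends only on their pairwise differences'' is not quite right (the midpoint-of-medians is not translation-invariant in that sense), but the conclusions you draw---$S(0)v=v$ from $\Phi(0,\dots,0)=0$, and translation equivariance from the structure $S(\ep)v(x)=v(x)+\Phi(\text{differences})$---are correct and match the paper's ``immediate'' remarks after \eqref{takis1250}.
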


We conclude with the properties of the median of points $x_1,\ldots,x_n\in \R^d$.
\begin{lemma}\label{takis1410}
Let $x_1,\ldots,x_n$ be real numbers. Then:\\
(i)~The set of medians is always a closed interval (which may be a single point).\\
(ii)~A point $x$ is a median if and only if $|\{i: x_i \ge x\}| \ge n/2$ and $|\{i: x_i \le x\}|\ge n/2$. \\
(iii)~If the set of medians is an interval $[a,b]$ with $a<b$, then $n$ must be even, and\\
 $|\{i: x_i \le a\}| = |\{i: x_i \ge b\}| = n/2$, and no $x_i$ is in $(a,b)$. 
\end{lemma}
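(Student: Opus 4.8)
The plan is to study the convex, coercive, piecewise-affine function $g(x)=\sum_{i=1}^{n}|x_i-x|$ on $\R$, whose set of minimizers is, by definition, the set of medians of $x_1,\dots,x_n$. Part (i) is then immediate: $g$ is continuous and convex, and $g(x)\to\infty$ as $|x|\to\infty$, so the minimizing set is nonempty, compact and convex, hence a closed bounded interval, possibly degenerate to a point.

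For part (ii) I would compute the one-sided derivatives of $g$, which exist at every point by convexity. Using that the right derivative of $x\mapsto|x_i-x|$ equals $\ind{x_i\le x}-\ind{x_i>x}$ and its left derivative equals $\ind{x_i<x}-\ind{x_i\ge x}$, summing over $i$ yields $g'_+(x)=2|\{i:x_i\le x\}|-n$ and $g'_-(x)=n-2|\{i:x_i\ge x\}|$. Since a point minimizes a convex function exactly when $g'_-(x)\le 0\le g'_+(x)$, substituting these two formulas gives precisely $|\{i:x_i\le x\}|\ge n/2$ and $|\{i:x_i\ge x\}|\ge n/2$, which is (ii). It is important to work with one-sided derivatives here, so that the argument remains valid when $x$ equals one of the data points.

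For part (iii), suppose the median set is $[a,b]$ with $a<b$. For any $a<x<y<b$ the monotonicity of the derivative of a convex function, combined with the fact that both $x$ and $y$ are minimizers, gives $0\le g'_+(x)\le g'_-(y)\le 0$, so $g'_+\equiv g'_-\equiv 0$ on $(a,b)$. The identity $g'_+(x)=0$ then forces $|\{i:x_i\le x\}|=n/2$ for every $x\in(a,b)$; in particular $n$ is even, and, since this cardinality is constant in $x\in(a,b)$, no $x_i$ can lie in $(a,b)$. Because then no $x_i$ lies in $(a,x]$ for $x\in(a,b)$, letting $x\downarrow a$ gives $|\{i:x_i\le a\}|=n/2$; symmetrically, using $g'_-(x)=0$ and letting $x\uparrow b$ gives $|\{i:x_i\ge b\}|=n/2$.

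There is no substantial obstacle in this argument; the only steps requiring attention are phrasing the derivative computation via one-sided derivatives so that it remains valid at the data points, and the combinatorial bookkeeping in part (iii) — checking that the counts $|\{i:x_i\le x\}|$ really are constant on the open interval $(a,b)$ and transferring them correctly to the endpoints $a$ and $b$.
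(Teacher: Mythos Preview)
Your proof is correct and follows essentially the same route as the paper's. The only cosmetic difference is that you phrase the key computation via one-sided derivatives $g'_\pm$ of the convex function $g(x)=\sum_i|x_i-x|$, whereas the paper works directly with finite increments, writing the change in $g$ when $x$ is shifted by $\pm\ep$ in terms of the counts $k=|\{i:x_i>x\}|$, $l=|\{i:x_i=x\}|$, $m=|\{i:x_i<x\}|$; the two formulations encode exactly the same inequalities.
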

{\begin{proof} The arguments are easy and well known so we only present a sketch.
\vs
The first property is a simple consequence of the convexity of the map $x\mapsto \sum_i |x_i-x|$. 
\vs 
To prove the second property, take any $x$, and let $k$, $l$ and $m$ be respectively the number of $i$ such that $x_i$ is greater than, equal to, and less than $x$. Increasing $x$ to $x+\ep$, for small enough $\ep>0$, increases $\sum_i |x_i -x|$ by $(l+m - k)\ep$. Thus, if $x$ is a median, then $l+m-k$ must be nonnegative, which is the same as saying that $ |\{i: x_i \le x\}| \ge n/2$. Similarly, it can be shown that decreasing $x$ to $x-\ep$ increases the sum by $k+l-m$, which implies that $k+l-m\ge 0$, which is the same as saying $ |\{i: x_i \ge x\}| \ge n/2$. Conversely, the same argument shows that if these two inequalities hold, then shifting $x$ a little to the left or the right cannot decrease $\sum_i |x_i-x|$, and so, $x$ is a median. 
\vs
Finally, to prove the third property, suppose that the set of medians is an interval $[a,b]$ with $a< b$. Take any $x\in (a,b)$. Let $k$, $l$ and $m$ be as above. Since $[a,b]$ is the set of medians, the above argument shows that $k+l-m$ and $l+m-k$ must both be equal to zero, which implies that $l=0$. Thus, no $x_i$ can belong to the interval $(a,b)$. Thus, for any $x\in (a,b)$, the sets $\{i:x_i \le x\}$ and $\{i: x_i \ge x\}$ are disjoint. Since $x$ is a median, (ii)  now implies that both of these sets must have size exactly $n/2$.
\end{proof}

The next lemma yields yet another property of the median which is relevant for the problem at hand. 
\begin{lemma}\label{medianlemma.a}
Let $a_1,\ldots,a_d, b_1,\ldots, b_d\in \R$.  Then $\max_i a_i$ and $\min_i a_i$ are respectively an upper and a lower bound for the midpoint of the set of medians of the $2d$ numbers $a_1\pm b_1,\ldots, a_d\pm b_d$. 
\end{lemma}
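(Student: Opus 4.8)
The plan is to represent the $2d$ numbers as $d$ symmetric pairs $\{a_i+b_i,\ a_i-b_i\}$, the $i$-th pair being centred at $a_i$, and to run a reflection argument against the largest centre $M=\max_i a_i$.

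First I would recall the description of the median set: if $s_{(1)}\le\cdots\le s_{(2d)}$ are the ordered values of a multiset of $2d$ reals, then by Lemma~\ref{takis1410}(i)--(ii) the set of medians is exactly the interval $[s_{(d)},s_{(d+1)}]$, so the ``midpoint of the set of medians'' equals $\tfrac12\bigl(s_{(d)}+s_{(d+1)}\bigr)$. I would also record the elementary monotonicity of order statistics: if $\{s_j\}_{j=1}^{n}$ and $\{t_j\}_{j=1}^{n}$ are multisets admitting a bijection $\pi$ with $s_j\le t_{\pi(j)}$ for every $j$, then $s_{(k)}\le t_{(k)}$ for every $k$. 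Indeed, the $k$ smallest elements of $\{t_j\}$ have maximum $t_{(k)}$, and their $\pi$-preimages exhibit $k$ elements of $\{s_j\}$ all bounded above by $t_{(k)}$, whence $s_{(k)}\le t_{(k)}$.

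Now the core step. Write $S=\{a_i+b_i,\ a_i-b_i:\ 1\le i\le d\}$, set $M=\max_i a_i$, and let $T$ be the image of $S$ under the reflection $x\mapsto 2M-x$, that is, $T=\{(2M-a_i)+b_i,\ (2M-a_i)-b_i:\ 1\le i\le d\}$. On the one hand, reflection reverses the order, so $t_{(k)}=2M-s_{(2d+1-k)}$ for all $k$; in particular $t_{(d)}=2M-s_{(d+1)}$. On the other hand, since $a_i\le M$ we have $2M-a_i\ge a_i$, so the bijection $a_i\pm b_i\mapsto(2M-a_i)\pm b_i$ satisfies the hypothesis of the monotonicity statement above, giving $s_{(k)}\le t_{(k)}$ for all $k$. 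Taking $k=d$ yields $s_{(d)}\le t_{(d)}=2M-s_{(d+1)}$, i.e.\ $s_{(d)}+s_{(d+1)}\le 2M$, so the midpoint $\tfrac12\bigl(s_{(d)}+s_{(d+1)}\bigr)$ of the set of medians of $S$ is at most $M=\max_i a_i$, which is the claimed upper bound. The lower bound follows by applying the upper bound to the $2d$ numbers $-a_i\pm b_i$ ($1\le i\le d$): their median-set midpoint is the negative of that of $S$, while $\max_i(-a_i)=-\min_i a_i$.

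I do not expect a genuine obstacle here; the only thing to get right is the interplay of the two effects of reflecting about the maximal centre --- it simultaneously reverses the order statistics and dominates the original multiset coordinatewise --- together with a clean statement of the order-statistics monotonicity, since that is the single ingredient doing the real work.
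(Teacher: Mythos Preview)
Your argument is correct and rather slick. It is, however, genuinely different from the paper's proof, so a brief comparison is in order.

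The paper argues directly with the characterisation of medians from Lemma~\ref{takis1410}. It assumes without loss of generality that $a_1=\max_i a_i$ and $b_i\ge 0$, then distinguishes two cases: if the median is a single point $x$, the fact that the $d$ numbers $a_i-b_i$ are all $\le a_1$ forces $a_1\ge x$ by the counting criterion; if the median set is a nondegenerate interval $[a,b]$, a short contradiction argument (using that exactly $d$ of the $c_i$ lie on each side) shows $a_1$ cannot lie in $[a,(a+b)/2)$.

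Your route avoids the case split entirely: reflecting the multiset $S$ about $M=\max_i a_i$ produces a multiset $T$ that dominates $S$ termwise (since each centre $a_i$ moves to $2M-a_i\ge a_i$ while the radii $|b_i|$ are unchanged), whence $s_{(k)}\le t_{(k)}$ for all $k$; combined with $t_{(k)}=2M-s_{(2d+1-k)}$ at $k=d$ this gives $s_{(d)}+s_{(d+1)}\le 2M$ in one line. What you gain is economy and a transparent reason why the \emph{midpoint} (rather than, say, the left endpoint) of the median interval is the right object: the reflection pairs $s_{(d)}$ with $s_{(d+1)}$. What the paper's approach buys is self-containment relative to the median lemma it has just proved, with no appeal to order-statistics monotonicity.
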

\begin{proof}
We prove  the upper bound. Without loss of generality, we may assume that $a_1\ge a_i$ for all $i$, and $b_1,\ldots, b_d$ are nonnegative. For $i=1,\ldots,d$, let $c_{2i-1} = a_i+b_i$ and $c_{2i} = a_i - b_i$. 
\vs

If the median is a unique point $x$, then any $y<x$ is not a median, and, hence, $|\{i: c_i \le y\}|<d$ because $|\{i: c_i \ge y\}|\ge |\{i: c_i \ge x\}| \ge d$. Moreover, for any $i$, we have $a_i -b_i \le a_i \le a_1$. Thus, $|\{i: c_i\le a_1\}|\ge d$, and, therefore, $a_1 \ge x$. 
\vs

Next, suppose that the set of medians is an interval $[a,b]$ and let $x = (a+b)/2$ be the midpoint of this interval. If  $a_1 < a$, then,  since $a$ is a median, $|\{i: c_i \ge a_1\}| \ge |\{i: c_i \ge a\}|\ge d$. It also  follows from Lemma~\ref{takis1410} that $|\{i: c_i\le a_1\}|\ge d$. Thus, $a_1$ must be a median, which is a contradiction since $a_1 \notin [a,b]$.
Hence, $a_1\ge a$. 
\vs

Assume that $a_1\in [a, x)$.  Then $a_i < x$ for all $i$. We know, however,  that exactly $d$ of the $c_i$'s are $\ge b$, and this can happen only if $a_i + b_i\ge b$ for each $i$. Since $a_i <x$, this implies that $a_i-b_i < a$ for each $i$, and,  in particular,   no $c_i$ can be equal to $a$. But then $a-\ep$ is also a median for sufficiently small $\ep$ which is a contradiction. Hence, $a_1\ge x$, and the proof of the upper bound is complete.
\vs
For the lower bound, it is enough to work with $-a_1,\ldots,-a_d, -b_1,\ldots,-b_d$ and apply the upper bound. 
\end{proof}

\subsubsection{\bf The non-strict minimum case.}  
We continue now with the case that $0$ is not a strict  minimum of $V$, in which case,   the symmetry of $V$ implies  that there exists $a>0$ such that
\beq\label{ex2.6.a}
V=0 \ \ \text{in} \ \ [-a,a] \ \ \text{and} \ \ V>0 \ \ \text{in} \ \ \R\setminus [-a,a].
\eeq 
This kind of potential arises in the so-called restricted solid-on-solid (RSOS) models, introduced by Kim and Kosterlitz~\cite{KK89}. The general principle of RSOS models is that the heights at neighboring points are restricted to be within some constant of each other. Very little is known rigorously about these models (see \cite{C2} for some recent results). In this subsection we will study a deterministic version of RSOS growth, induced by the potential displayed above. 
\vs
Fix $v_{\pm 1},\ldots, v_{\pm d} \in [-a, a]$. It follows that, for all $y \in [\underset{i=1,\ldots,d}\max v_{\pm i} - a, \underset{i=1,\ldots,d}\min v_{\pm i} +a]$,
\[\sum_{i=1}^d V(y-v_{-i}) + V(y-v_i) =0,\]
that is, the map $y \to \sum_{i=1}^d V(y-v_{-i}) + V(y-v_i)$ achieves a minimum, which is $0$, on the interval 
$[\underset{i=1,\ldots,d}\max v_{\pm i} - a, \underset{i=1,\ldots,d}\min v_{\pm i} +a].$
\vs

Following the discussion at the beginning of the ongoing section, we choose  the middle point of this interval. Thus the scheme we are working with here at scale one is defined, for $u:\Z^d\times \Z_+ \to \R$, by 
\beq\label{takis1400}
u(x,t+1)=\varphi ((u(x+a,t))_{a\in A})= u(x,t) + \df{1}{2}[\underset{b\in B}\max \ (u(x+b,t)-u(x,t)) + \underset{b\in B}\min \ (u(x+b,t)-u(x,t))]
\eeq
(Recall that $B = \{\pm e_1,\ldots,\pm e_d\}$ and $A = B\cup\{0\}$.) That  $\varphi(0)=0$,  and the  equivariance under translations by constants and monotonicity are immediate. 
\vs

It is immediate from the discussion about the choice of scale that, in the setting discussed here,  the ``correct scaling'' is the parabolic one.
\vs

At the parabolic  scale, the scheme is generated by the map $\ep \to S(\ep):\mathcal B \to \mathcal B$, given, for $v \in \mathcal B$, by 
\beq\label{1401}
S(\ep)v(x)=v(x) + \df{1}{2}\left[\underset{i=1,\ldots,d}\min [\phi (x \pm \sqrt \ep e_i)-\phi (x)]+  \underset{i=1,\ldots,d}\max [\phi (x \pm \sqrt \ep e_i)-\phi (x)]\right].
\eeq
  
In what follows, to simplify the notation, we  use the map $\Phi:\R^{2d}\to \R$ give by 
\[
\Phi (v_{-1}, v_1, \ldots, v_{-d}, v_d) = \frac{1}{2}\left[\underset{i=1,\ldots,d}\min v_{\pm i} +  \underset{i=1,\ldots,d}\max v_{\pm i}\right].
\]

Then \eqref{1401} can be rewritten as
\[
\begin{split}
&\Phi(\phi(x \pm \sqrt{\ep}e_1)-\phi (x), \ldots,\phi(x \pm \sqrt{\ep}e_d)-\phi (x))\\[2mm]
&\qquad \qquad = \frac{1}{2} \left[\underset{i=1,\ldots,d}\min [\phi (x \pm \sqrt \ep e_i)-\phi (x)]+  \underset{i=1,\ldots,d}\max [\phi (x \pm \sqrt \ep e_i)-\phi (x)]\right].
\end{split}
\]

In view of the previous observations, the only fact we need to check is the consistency of the scheme, which is about the behavior, as  $(\ep,y) \to (0,x)$ and $\phi$ smooth, of the ratio 
\[\df{S(\ep)(\phi)(y)-\phi(y)}{\ep}=\df{\Phi^{\ep}(y)}{\ep}, \]
where, to ease the notation, we write 
\begin{align}\label{maxphi.a}
\Phi^{\ep}(y) =\dfrac{1}{2}\biggl[\min_{i=1,\ldots,d} [\phi(y \pm \sqrt \ep e_i)-\phi (y)]+ \max_{i=1,\ldots,d} [\phi(y \pm \sqrt \ep e_i)-\phi(y)]\biggr].
\end{align}

\vs
The following lemma identifies the asymptotic behavior of $\ep^{-1}\Phi^{\ep}(y)$ as $\ep \to 0$ and $y\to x$ for some $x$.
\vs

For the statement, we  introduce, for each nonempty $E\subseteq \{1,\ldots,d\}$, the subset $V_E$ of $\R^d$ given by   
\[
V_E = \{p\in\R^d: |p_i| = |p_j| \text{ for all } i,j\in E \text{ and } |p_i| > |p_j| \text{ for all } i\in E, j\notin E\};
\]
when $E$ is a singleton set like $\{i\}$, we will write $V_i$ instead of $V_{\{i\}}$.  Although this bears similarities with the one of the previous subsection, is just the ``opposite''. Nevertheless, to keep the notation under control we use the same symbols.
\vs

\vs

It is immediate that the $V_E$'s form a partition of $\R^d$.
\vs
\begin{lemma}\label{maxphilemma.a}
Let $\phi:\R^d \to \R$ be smooth and $x\in \R^d$. If $D\phi(x)\in V_E$,  then, locally uniformly in $x$,
\[
 \dfrac{1}{2}\min_{j\in E} \phi_{x_jx_j}(x) \le \liminf_{y\to x, \ep \to 0} \df{\Phi^{\ep}(y)}{\ep}\le \limsup_{y\to x, \ep \to 0} \df{\Phi^{\ep}(y)}{\ep} \le \frac{1}{2}\max_{j\in E} \phi_{x_jx_j}(x). 
\]
Moreover, if $D\phi(x)\in V_i$ for some $i$, then
\beq\label{ex2.14.a}
\lim_{y\to x, \ep \to 0} \df{\Phi^{\ep}(y)}{\ep}= \frac{1}{2} \phi_{x_ix_i}(x).
\eeq
\end{lemma}
\begin{proof}
For $y$ near  $x$ and $\ep$ small, we have
\begin{align*}
\Phi^{\ep}(y) &= \frac{1}{2}\biggl[\min_{i=1,\ldots,d} \biggl[\pm\sqrt{\ep} \phi_{x_i}(y) + \frac{\ep}{2}\phi_{x_ix_i}(y) + \too(\ep)\biggr]+ \max_{i=1,\ldots,d} \biggl[\pm\sqrt{\ep} \phi_{x_i}(y) + \frac{\ep}{2}\phi_{x_ix_i}(y) + \too(\ep)\biggr]\biggr].
\end{align*}

First, suppose that $D\phi(x)\in V_i$ for some $i$, which, by the definition of $V_i$,  implies  that $\phi_{x_i}(x)>0$. Then the above expression shows that, when $y$ and $\ep$ are respectively sufficiently close to $x$ and $0$,
\begin{align*}
\Phi^{\ep}(y) &= \frac{1}{2}\biggl[-\sqrt{\ep} \phi_{x_i}(y) + \frac{\ep}{2}\phi_{x_ix_i}(y) + \too(\ep)\biggr]+ \frac{1}{2}\biggl[\sqrt{\ep} \phi_{x_i}(y) + \frac{\ep}{2}\phi_{x_ix_i}(y) + \too(\ep)\biggr]\\
&= \frac{\ep}{2}\phi_{x_ix_i}(y) + \too(\ep),
\end{align*}
which proves the second claim of the lemma. 
\vs

Next, we assume  that $D\phi(x)\in V_E$ for some $E$. When $y$ and $\ep$ are sufficiently close to $x$ and $0$ respectively, the maximum in \eqref{maxphi.a} is attained at $e_i$ or $-e_i$ for some $i\in A$, which possibly depends on $y$ and $\ep$. 
\vs

Suppose that the maximum is attained at  $e_i$ for some $i\in E$. Then
\begin{align*}
\Phi^{\ep}(y) &= \frac{1}{2} \min_{j=1,\ldots,d} \biggl[\pm\sqrt{\ep}\phi_{x_j}(y) + \frac{\ep}{2} \phi_{x_jx_j}(y) + \too(\ep)\biggr] + \frac{1}{2} \biggl[\sqrt{\ep}\phi_{x_i}(y) + \frac{\ep}{2} \phi_{x_ix_i}(y) + \too(\ep)\biggr] \\
 &\le \frac{1}{2} \biggl[-\sqrt{\ep}\phi_{x_i}(y) + \frac{\ep}{2} \phi_{x_ix_i}(y) + \too(\ep)\biggr] + \frac{1}{2} \biggl[\sqrt{\ep}\phi_{x_i}(y) + \frac{\ep}{2} \phi_{x_ix_i}(y) + \too(\ep)\biggr] \\
&= \frac{\ep}{2} \phi_{x_ix_i}(y) + \too(\ep)\le \frac{\ep}{2}\max_{j\in E} \phi_{x_jx_j}(y) + \too(\ep).
\end{align*}
\vs

Similarly, if the maximum is attained at $-e_i$ for some $i\in E$, then 
\begin{align*}
\Phi^{\ep}(y) &= \frac{1}{2} \min_{j=1,\ldots,d} \biggl[\pm\sqrt{\ep}\phi_{x_j}(y) + \frac{\ep}{2} \phi_{x_jx_j}(y) + \too(\ep)\biggr] + \frac{1}{2} \biggl[-\sqrt{\ep}\phi_{x_i}(y) + \frac{\ep}{2} \phi_{x_ix_i}(y) + \too(\ep)\biggr] \\&\le \frac{1}{2} \biggl[\sqrt{\ep}\phi_{x_i}(y) + \frac{\ep}{2} \phi_{x_ix_i}(y) + \too(\ep)\biggr] + \frac{1}{2} \biggl[-\sqrt{\ep}\phi_{x_i}(y) + \frac{\ep}{2} \phi_{x_ix_i}(y) + \too(\ep)\biggr] \\
&= \frac{\ep}{2} \phi_{x_ix_i}(y) + \too(\ep)\le \frac{\ep}{2}\max_{j\in E} \phi_{x_jx_j}(y) + \too(\ep).
\end{align*}
A similar argument yields that 
\begin{align*}
\Phi^{\ep}(y) &\ge \frac{\ep}{2}\min_{j\in E} \phi_{x_jx_j}(y) + \too(\ep).
\end{align*}
The proof is now  complete. 
\end{proof}
\vs
We introduce next the limiting pde. Let  $F \in C(\mathcal S^d\times \overset{d}{\underset{i=1}{\bigcup}} V_i )$ given by 
\beq\label{ex2.15.a}
F(X,p)= \frac{X_{ii}}{2} \ \ \text{if } \ \ p\in V_i.
\eeq

Then \eqref{ex2.14.a} yields the consistency property if $D\phi (x) \in  \overset{d}{\underset{i=1}{\bigcup}} V_i$. 
\vs

To complete the definition of $F$  we need to analyze what happens on $\partial (\overset{d}{\underset{i=1}{\bigcup}} V_i)$,  can be written as a disjoint union of $V_E$ over all $E$ of size at least $2$.
\vs 

Fix $(X,p)\in \mathcal{S}^d\times V_E$ and consider  a sequence $(Y,q)\in \mathcal{S}^d\times \overset{d}{\underset{i=1}{\bigcup}} V_i$ converging to $(X,p)$. Passing to a subsequence, we may assume that there exists some $i$ such that $q\in V_i$ always. Thus, $F(Y,q)=\frac{1}{2}Y_{ii}$, and, hence, $F(Y,q)$ converges to $\frac{1}{2}X_{ii}$. This implies, in particular, that any such $i$ must be a member of $E$. It follows that
$$F^\star(X,p)\le \frac{1}{2}\max_{i\in E} X_{ii} \ \text{and} \  F_\star(X,p)\ge \frac{1}{2} \min_{i\in E} X_{ii}.$$ 
\vs 

If, however,  $i^*\in E$ is a coordinate at which $X_{ii}$ is maximized, then it is easy to see, and we leave the details to the reader, that we can  choose $(Y,q)\to (X,p)$ so that $q\in V_{i^*}$ always. 
It follows that  $$F^\star(X,p)\ge \frac{1}{2}\max_{i\in E} X_{ii},$$
and, similarly, $$F_\star(X,p)\le \frac{1}{2}\min_{i\in E} X_{ii}.$$ 
\vs

Thus, we conclude that, when $p\in V_E$,
\beq\label{2.19.a}
F^\star(X,p)= \frac{1}{2} \max_{i\in E} X_{ii} \ \ \text{and}  \ \ F_\star(X,p) = \frac{1}{2}\min_{i\in E}X_{ii}.
\eeq
It is now easy to verify \eqref{consistency} using Lemma \ref{maxphilemma.a}.
\vs
For $u_0\in \text{BUC}(\R^d)$ and $F$ given by \eqref{ex2.15.a} we consider the initial value problem \eqref{ivp1} with $F^\star$ and $F_\star$ defined as in \eqref{2.19.a}, which can be formulated as  \eqref{general_ivp1} with 
\[
\overline F=\underline F=F \ \ \text{in} \ \ \mathcal S^d\times \overset{d}{\underset{i=1}{\bigcup}} V_i \ \ \text{and} \ \ 
\overline F=F^\star \ \ \text{and} \ \ \underline F=F_\star \ \ \text{in} \ \ \mathcal S^d\times \partial \overset{d}{\underset{i=1}{\bigcup}} V_i.
\]
It follows from It follows from Theorem~5 in \cite{MS} that the initial value problem  \eqref{general_ivp1} with $\overline F$ and $\underline F$ as in \eqref{hp21} satisfies \eqref{lh1}. 
\vs
It is  shown in \cite{MS} (see Lemma 3) that the nonlinearity $F$  defined above is encodes an infinity Laplacian and is compatible with 
the polyhedral norm $\phi_A$, where again $A$ is as in the beginning of Section~\ref{setup}, with dual  
\[\phi_A^\star(p)=\max \{(p,e) \; : e\in A\}.\]
Hence, in view of Theorem 5 in \cite{MS}, the corresponding initial value problem admits a comparison principle. 
\vs 
Combining all the above we have now the next result of the paper.

\begin{theorem}\label{thm4}
Assume \eqref{v.a} and \eqref{ex2.6.a}. Then the scheme defined using \eqref{1401} converges, as $\ep \to 0$ and locally uniformly in $\R^d\times [0,T]$, to the unique  solution of \eqref{ivp1} with $F$ as in \eqref{ex2.15.a}.
\end{theorem}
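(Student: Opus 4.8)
The plan is to verify the hypotheses of Theorem~\ref{thm1} for the family $(S(\ep))_{\ep\in[0,1]}$ defined in \eqref{1401} and then quote that theorem. First I would record that $S(\ep)$ maps $\mathcal B$ into $\mathcal B$ and satisfies \eqref{id}, \eqref{translation}, \eqref{monotonicity}. This is immediate from the explicit form: after the cancellation of the $v(x)$ terms one has $S(\ep)v(x)=\frac{1}{2}\bigl[\min_{b\in B}v(x+\sqrt\ep\, b)+\max_{b\in B}v(x+\sqrt\ep\, b)\bigr]$, which does not depend on $v(x)$, is monotone in the values at the neighbours, is unchanged when a constant is added, and reduces to $v$ at $\ep=0$ since all neighbours collapse onto $x$. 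Together with $\varphi(0)=0$ and the identification of the scaled height $u^\ep$ coming from \eqref{takis1400} with the abstract scheme \eqref{scheme}, carried out in Section~\ref{setup}, this puts us in the setting of Theorem~\ref{thm1}.

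Second, and this is the analytic core of the argument, I would establish the consistency relation \eqref{consistency}. Take $M=\mathbb{S}^{d-1}\cap\bigl(\bigcup_{|A|\ge 2}V_A\bigr)$, so that $\R^d\setminus\ol\R_+M=\bigcup_{i=1}^d V_i$, and define $F$ on $\mathcal S^d\times(\R^d\setminus\ol\R_+M)$ by $F(X,p)=X_{ii}/2$ for $p\in V_i$; this $F$ is continuous on its domain and degenerate elliptic. For a test function $\phi\in C^2(\R^d)$ with $D\phi(x)\in V_i$, Lemma~\ref{maxphilemma.a}, precisely \eqref{ex2.14.a}, gives $\lim_{y\to x,\ep\to 0}\bigl(S(\ep)\phi(y)-\phi(y)\bigr)/\ep=\frac{1}{2}\phi_{x_ix_i}(x)=F(D^2\phi(x),D\phi(x))$. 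When instead $D\phi(x)\in\ol\R_+M$, i.e. $D\phi(x)\in V_A$ for some $A$ with $|A|\ge 2$ (the case $D\phi(x)=0$ being $A=\{1,\dots,d\}$), the two-sided bound \eqref{aaa} of Lemma~\ref{maxphilemma.a}, combined with the computation of $F^\star$ and $F_\star$ recorded in \eqref{2.19.a}, yields $\limsup_{y\to x,\ep\to 0}\bigl(S(\ep)\phi(y)-\phi(y)\bigr)/\ep\le\frac{1}{2}\max_{j\in A}\phi_{x_jx_j}(x)=F^\star(D^2\phi(x),D\phi(x))$ and the matching lower bound with $F_\star$. This is exactly \eqref{consistency} with the chosen $M$.

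Third, I would invoke the comparison principle \eqref{comp1} for the crystalline infinity Laplacian, i.e. \eqref{ivp1} with $F$ as in \eqref{ex2.15.a} and $F^\star,F_\star$ as in \eqref{2.19.a}: when $d=2$ the singular locus $M$ is a smooth (zero-dimensional) submanifold of $S^1$, so well-posedness in $\buc(\R^d\times[0,T])$ follows from Theorem~1 of \cite{I}; when $d\ge 3$ the set $M$ is no longer a smooth manifold and one appeals instead to \cite{MS}. With \eqref{id}, \eqref{translation}, \eqref{monotonicity}, \eqref{consistency}, and \eqref{comp1} all verified, Theorem~\ref{thm1} applies and gives that $u^\ep\to u$ as $\ep\to 0$, locally uniformly in $\R^d\times[0,T]$, with $u$ the unique solution of \eqref{ivp1}, which is the assertion.

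The one delicate point internal to the paper is the passage from Lemma~\ref{maxphilemma.a} to \eqref{consistency} on the gradient-degeneracy set: one must check that the one-sided envelopes $F^\star,F_\star$ read off in \eqref{2.19.a} genuinely coincide with the $\limsup$/$\liminf$ in \eqref{aaa} along \emph{all} sequences $(y,\ep)\to(x,0)$, and that the choice $M=\mathbb{S}^{d-1}\cap\bigcup_{|A|\ge2}V_A$ is exactly what makes $F$ continuous off $\ol\R_+M$; this is precisely what the envelope computation preceding \eqref{2.19.a} is set up to guarantee, so no new work is needed. The genuinely hard input — the comparison principle \eqref{comp1} in dimension $d\ge 3$ — lies outside the present paper and is supplied by \cite{MS}; given it, the proof is the routine assembly of already-verified hypotheses into Theorem~\ref{thm1}.
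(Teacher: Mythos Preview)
Your proposal is correct and mirrors the paper's own argument: the paper assembles exactly the same ingredients in the text preceding the statement (checking \eqref{id}, \eqref{translation}, \eqref{monotonicity}, invoking Lemma~\ref{maxphilemma.a} together with the envelope computation \eqref{2.19.a} to get \eqref{consistency} with $M=\mathbb S^{d-1}\cap\bigcup_{|A|\ge 2}V_A$, and citing \cite{I} for $d=2$ and \cite{MS} for $d\ge 3$ for \eqref{comp1}), and then simply states ``Combining all the above we have now the next result of the paper.'' Your write-up is a faithful and slightly more explicit rendition of that assembly.
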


\bigskip

\subsection*{Acknowlegments}
The first author was partially supported by National Science Foundation grant DMS-1855484. The second author was partially supported by the National Science Foundation grant  DMS-1900599, the Office for Naval Research grant N000141712095 and the Air Force Office for Scientific Research grant FA9550-18-1-0494. 

The authors would also like to thank Ery Arias-Castro, Peter Morfe and Lexing Ying for helpful suggestions.

\end{document}